\title[Groups of Finite Morley Rank]{A Generic Identification Theorem
for Groups\\ of Finite Morley Rank, Revisited}
\author[A. Berkman]{Ay\c{s}e Berkman}
\address{Mathematics Department, Mimar Sinan University, Silahsor Cad. 71, Bomonti Sisli 34380, Istanbul, Turkey.}
\email{ayseberkman@gmail.com}
\author[A. V. Borovik]{Alexandre V. Borovik }
\address{School of Mathematics,  University of Manchester, Oxford Road, Manchester M13 9PL}
\email{alexandre.borovik@gmail.com}
\newtheorem{lemma}{Lemma}[section]
\newtheorem{theorem}[lemma]{Theorem}
\newtheorem{fact}[lemma]{Assertion}
\newcommand{\acfd}{algebraically closed field}
\newcommand{\acf}{algebraically closed field }
\newcommand{\fmr}{finite Morley rank }
\def\proof{\noindent {\bf Proof.} $\,$}
\begin{document}

\maketitle

\begin{abstract}
This paper contains a stronger version of a final identification theorem for the
`generic' groups of finite Morley rank.
\end{abstract}

\section{Introduction}

This paper contains a version of a generic identification theorem for simple groups of finite Morley rank adapted from the main result of \cite{BB04} by weakening its assumptions. It is published because it is being used in the authors' study of actions of groups of finite Morley rank \cite{BB11}.

A general discussion of the subject can be found in the books \cite{bn} and \cite{abc}.

A group of finite Morley rank is said to be of {\em $p'$-type}, if
it contains no infinite abelian subgroup of exponent $p$. Notice
that a simple algebraic group over an \acf $K$ is of $p'$-type if
and only if ${\rm char}\, K \ne p$. Other definitions can be found in
the next section.

The aim of this work is to prove the following.

\begin{theorem}
Let\/ $G$\/ be a simple  group of finite
Morley rank and\/ $D$ a maximal\/ $p$-torus  in $G$
of Pr\"{u}fer rank at least\/ $3$. Assume that
\begin{itemize}
\item[{\rm (A)}]  Every proper connected definable subgroup of $G$ which contains $D$ is a $K$-group. 

\item[{\rm (B)}] For every element\/ $x$ of order\/
$p$ in $D$, the group $C^\circ_G(x)$ is of\/ $p'$-type and
$C^\circ_G(x)=F^\circ(C^\circ_G(x))E(C^\circ_G(x)) $.

\item[{\rm (C)}]     $\langle C^\circ_G(x) \mid x \in D, \;|x|=p\rangle = G$.

\end{itemize}
Then $G$ is a Chevalley group over an \acf of characteristic distinct
from
$p$.
\label{sis-kebap}
\end{theorem}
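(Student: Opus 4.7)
The plan is to recover a Chevalley group structure on $G$ by exhibiting a root system of rank-one subgroups and then invoking a Curtis--Tits-style identification, following the general strategy of \cite{BB04}.

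First, for each element $x \in D$ of order $p$, set $H_x := C^\circ_G(x)$; since $D$ is connected and centralizes $x$, we have $D \leq H_x$, so by~(A) the group $H_x$ is a proper connected definable $K$-group of the kind covered by~(B): it is of $p'$-type and $H_x = F^\circ(H_x)\cdot E(H_x)$. By the structure theory of $K$-groups of $p'$-type, each quasisimple component of $E(H_x)$ is a simple Chevalley group over an \acf of characteristic distinct from $p$, and $D$ projects onto a maximal $p$-torus of each such component. In this way $H_x$ carries an intrinsic root system with a full set of root subgroups adapted to $D$.

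Second, exploiting that $D$ has Pr\"ufer rank at least $3$, I would take commuting pairs $x,y \in D$ of order $p$ and analyse the groups $C^\circ_{H_x}(y)$; repeatedly intersecting centralizers of carefully chosen order-$p$ elements in $D$ isolates rank-one subgroups $L_\alpha$ (isomorphic to $SL_2$ or $PSL_2$) indexed by ``roots'' $\alpha$ on $D$, together with the commutation relations they satisfy inside the various rank-two centralizers. The critical technical step is to verify that these local data are globally compatible: that the \acfs of definition of the different quasisimple components coincide with a single field $K$, and that the $L_\alpha$ fit consistently into one root system $\Phi$ of Lie rank equal to the Pr\"ufer rank of $D$.

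Finally, hypothesis~(C) forces the $L_\alpha$ together with $D$ to generate all of $G$, and the resulting amalgam of rank-one and rank-two subgroups takes the shape of the Chevalley amalgam of the corresponding simply connected group over $K$ --- crucially of Lie rank $\geq 3$, which is exactly the range in which Curtis--Tits-type identification applies. A Curtis--Tits theorem in the \fmr setting (as used in \cite{BB04}) then identifies $G$ with the Chevalley group over $K$ associated to $\Phi$, completing the proof. The main obstacle I anticipate is the coordination step of the second paragraph: unifying the fields of definition across different centralizers and checking that the rank-one subgroups extracted from different $H_x$ satisfy Steinberg-type commutator relations consistent with a single root system; simplicity of $G$ together with~(C) should be essential at this point.
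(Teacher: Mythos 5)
Your outline follows the same overall route as the paper: extract rank-one $(P)SL_2$-subgroups normalised by $D$ from the centralizers $C^\circ_G(x)$, show they generate $G$ via hypothesis (C) and simplicity, and identify $G$ by a Lyons/Curtis--Tits-type theorem in Lie rank at least $3$. The first and last paragraphs of your plan are sound and match the paper's Lemmas on $\Sigma$ being non-empty, generating $G$, and having a single underlying field (the field unification you worry about is in fact the easy part: it follows from connectedness of the non-commuting graph on $\Sigma$, since any edge forces the two $SL_2$'s to sit in a common rank-two algebraic group).

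The genuine gap is exactly at the point you flag as ``the main obstacle'': you have no mechanism for producing the single root system $\Phi$ into which the $L_\alpha$ fit, and ``repeatedly intersecting centralizers of order-$p$ elements'' will only ever give you the rank-two local data, not the global Dynkin diagram. The paper's resolving idea, entirely absent from your plan, is to construct the Weyl group first and the root system from it, rather than the other way around. Concretely: each $L\in\Sigma$ contributes an involution $r_L\in N_G(D)/C_G(D)$ acting on $D$ as a reflection; the group $W_0=\langle r_L\rangle$ is made to act faithfully and irreducibly as a reflection group both on the complexified Tate module $T_p\otimes_{\mathbb{Z}_p}\mathbb{C}$ (dimension equal to the Pr\"ufer rank, hence at least $3$) and on the $q$-torsion subgroups $E_q$ of the natural torus $H=\langle H_L\rangle$ over $\mathbb{F}_q$ for almost all primes $q$; the classification of finite irreducible reflection groups admitting reflection representations over $\mathbb{C}$ and over almost all $\mathbb{F}_q$ then forces $W_0$ to be crystallographic of type $A_n,B_n,C_n,D_n,E_6,E_7,E_8$ or $F_4$, which hands you the root system and the Dynkin diagram for free. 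Only after that does one attach an $L_i\in\Sigma$ to each node and check the Lyons hypotheses (including the bijection between reflections of $W_0$ and members of $\Sigma$, and the $(P)SL_3$ versus $(P)Sp_4$ dichotomy according to whether $|r_ir_j|$ is $3$ or $4$). Without this Weyl-group detour, or some substitute for it, your amalgam has no reason to be ``the Chevalley amalgam of a simply connected group'' and the identification theorem cannot be invoked; so as written the proposal does not close.
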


Notice that, under assumption (B) of Theorem~\ref{sis-kebap},
$C^\circ_G(x)$ is a central product of $F^\circ(C^\circ_G(x))$
and $E(C^\circ_G(x))$.

The predecessor of this theorem, Theorem 1.2 of \cite{BB04}, was based on a stronger assumption than our assumption (A), namely, that \emph{every} proper definable subgroup of $G$ is a $K$-group, a condition that was difficult to check in its actual applications.

The proof of Theorem~\ref{sis-kebap} is given in
Section~\ref{sec:sis-kebap}.

\subsection{Definitions}

All definitions related to groups of finite Morley rank in general
can be found in \cite{bn} and \cite{abc}.

{}From now on $G$ is a group of finite Morley rank. The group $G$
is called 
%
a {\it $K$-group}, if every
infinite simple definable and connected section of the group is an
algebraic group over an algebraically closed field.
%
%


A {\em $p$-torus} $S$ is a direct product of finitely many copies
of the quasicyclic group ${{\Bbb Z}}_{p^\infty}$. The number of
copies is called the {\em Pr\"{u}fer $p$-rank} of $S$ and is
denoted by ${\rm pr}(S)$. For a definable group $H$, ${\rm
pr}(H)$ is the maximum of the Pr\"{u}fer ranks of $p$-subgroups
in $H$. It is easy to see that the Pr\"{u}fer $p$-rank of any
subgroup of a group of finite Morley rank is finite.

A group $H$ is called {\it quasi-simple} if $H'=H$ and $H/Z(H)$
is simple and non-abelian.  A quasi-simple subnormal subgroup of
$G$ is called a {\it component} of $G$.  The product of all
components of $G$ is called the {\it layer} of $G$ and denoted by
$L(G)$, and $E(G)$ stands for $L^\circ(G)$. It is known (see
Lemmas 7.6 and 7.10 in \cite{bn}) that $G$ has finitely many
components and that they are definable and are normal in $E(G)$.

$F(H)$ is the
{\em Fitting subgroup} of $H$, that is, the maximal normal definable
nilpotent subgroup.

If $H$ is a group of finite Morley rank then $B(H)$ is the
subgroup generated by all definable connected $2$-subgroups of
bounded exponent in $H$. Note that $B(H)$ is connected by
Assertion~\ref{zilb}.



\section{Background Material}

\subsection{Algebraic Groups}

For a discussion of the model theory of algebraic groups, the reader
might like to see Section 3.1 in \cite{berkman}.
The basic structural facts and definitions related to algebraic groups
can be easily found in the standard references
such as \cite{cartk,hump}.

First note that a connected algebraic group $G$ is called {\em
simple} if it has no proper normal connected and closed
subgroups. Such a group turns out to have a finite center, the
quotient group being simple as an abstract group. The classical
classification theorem for simple algebraic groups states that
simple algebraic groups over algebraically closed fields are
Chevalley groups, that is, groups constructed from Chevalley
bases in simple complex Lie algebras as described, for example, in
\cite{cartk}.

Now fix a maximal torus $T$ in a connected algebraic group
$G$ and denote the corresponding
root system by $\Phi$, and  for each $\alpha\in \Phi$, denote the
corresponding root subgroup by $X_\alpha$. The subgroup $\langle
X_\alpha,X_{-\alpha}\rangle$ is known to be isomorphic to $SL_2$
or $PSL_2$ and is called a {\em root $SL_2$-subgroup}.

If $G$ is simple,
the roots can have at most two different lengths, and the terms `short
root $SL_2$-subgroup' and `long root $SL_2$-subgroup'
have the obvious meanings.

A simple algebraic group is generated by its root $SL_2$-subgroups. In a
simple algebraic group, all long root
$SL_2$-subgroups are conjugate to each other, and similarly all short
root $SL_2$-subgroups are conjugate to each other.

\begin{fact}
\label{prop:class-in-algebraic} Suppose that\/ $G$ is a simple algebraic
group over an algebraically closed field. Let\/ $T$ be a
maximal torus in $G$ and $K$, $L$ closed subgroups of $G$
that are isomorphic to $SL_2$ or\/ $PSL_2$ and are
normalised by $T$. Then the following hold.
\begin{enumerate}
\item Either\/ $[K,L]=1$ or\/ $\langle
K,L\rangle$ is a simple algebraic group of rank $2$;
that is of type $A_2$, $B_2$ or\/ $G_2$.
\item The subgroups $K$ and $L$
are embedded in $\langle K,L\rangle$ as root $SL_2$-subgroups.
\item If $\langle K,L\rangle$ is of type $G_2$, then $G=\langle K,L\rangle$.
\end{enumerate}
\end{fact}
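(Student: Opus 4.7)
The plan is to identify $K$ and $L$ as root $SL_2$-subgroups of $G$ relative to $T$, and then examine the rank-two closed subsystem of $\Phi$ that they jointly span.

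\emph{Step 1: Recognising $K$ and $L$ as root $SL_2$'s.} Since $T$ normalises $K$, standard facts give a maximal torus $T_K$ of $K$ that is $T$-invariant, and rigidity of tori (the automorphism group of a torus is discrete, so a connected group acts trivially on any torus it stabilises) forces $T$ to centralise $T_K$; maximality of $T$ in $G$ then yields $T_K \le T$. Consequently $T$ normalises each of the two opposite one-dimensional unipotent subgroups $U^{\pm}$ of $K$ with respect to $T_K$. But every $T$-invariant one-dimensional connected unipotent subgroup of $G$ has Lie algebra a one-dimensional $T$-weight space, hence coincides with a root subgroup $X_{\pm\alpha}$ for some $\alpha \in \Phi$. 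Thus $K = \langle X_\alpha, X_{-\alpha}\rangle$, and similarly $L = \langle X_\beta, X_{-\beta}\rangle$ for some $\alpha,\beta \in \Phi$.

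\emph{Step 2: Analysing the generated subsystem.} If $\beta = \pm\alpha$ then $K = L$ and there is nothing to say, so I would assume $\alpha$ and $\beta$ are linearly independent and consider the closed rank-two subsystem $\Psi := \Phi \cap (\mathbb{Q}\alpha + \mathbb{Q}\beta)$. By the Chevalley commutator formulas, the subgroup $H := \langle K, L\rangle$ coincides with $\langle X_\gamma : \gamma \in \Psi\rangle$, a connected semisimple subgroup whose root system, with respect to the subtorus of $T$ generated by the coroots of $\Psi$, is precisely $\Psi$. Now $\Psi$ is one of $A_1 \times A_1$, $A_2$, $B_2$, $G_2$. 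In the $A_1 \times A_1$ case no integer combination $i\alpha + j\beta$ with $i,j$ both nonzero is a root, so Chevalley's commutator relations imply that the root subgroups $X_{\pm\alpha}$ commute with $X_{\pm\beta}$, giving $[K, L] = 1$. In the three remaining cases $H$ is simple of rank two, proving (1); and by construction $K$ and $L$ are the root $SL_2$-subgroups of $H$ attached to the roots $\alpha$ and $\beta$ of $\Psi$, proving (2).

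\emph{Step 3: The $G_2$ case.} If $\Psi$ is of type $G_2$, it contains pairs of roots whose squared-length ratio is $3$. Since $G_2$ is the only irreducible root system in which root lengths occur in this ratio (all other irreducible types have ratio $1$ or $2$), $\Phi$ itself must be of type $G_2$; but then $|\Phi| = 12 = |\Psi|$, so $\Psi = \Phi$ and $G = H$, giving (3).

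The main technical step is Step 1 — passing from the abstract data ``$K$ is a $T$-invariant $SL_2$ or $PSL_2$ subgroup'' to the explicit description $K = \langle X_\alpha, X_{-\alpha}\rangle$ with $\alpha \in \Phi$. Once this identification is in hand, what remains is straightforward rank-two root-system bookkeeping combined with the Chevalley commutator formula.
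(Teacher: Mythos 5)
The paper does not prove this assertion from scratch; it simply cites Seitz's description of $T$-invariant closed subgroups. Your Step~1 is essentially a correct reconstruction of the relevant part of that description (one small point you should make explicit: having written $U^+=X_\alpha$ and $U^-=X_\beta$, you need $\beta=-\alpha$; this follows because otherwise $\alpha,\beta$ would be linearly independent, hence simultaneously positive for some ordering, making $K=\langle X_\alpha,X_\beta\rangle$ unipotent, a contradiction).

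There is, however, a genuine error in Step~2. The subgroup $\langle K,L\rangle=\langle X_{\pm\alpha},X_{\pm\beta}\rangle$ is \emph{not} in general equal to $\langle X_\gamma : \gamma\in\Phi\cap(\mathbb{Q}\alpha+\mathbb{Q}\beta)\rangle$; the Chevalley commutator formula only gives you the root subgroups for roots in the \emph{closed subsystem generated by} $\{\pm\alpha,\pm\beta\}$ (the closure of this set under addition inside $\Phi$), which can be strictly smaller than $\Phi\cap(\mathbb{Q}\alpha+\mathbb{Q}\beta)$. Concretely: in $B_2$ take $\alpha=e_1+e_2$ and $\beta=e_1-e_2$, two orthogonal long roots. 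No integral combination $i\alpha+j\beta$ with $ij\neq 0$ is a root, so $[K,L]=1$ and $\langle K,L\rangle$ is a central product of two $SL_2$'s — yet your $\Psi$ equals all of $B_2$, and your argument would declare $\langle K,L\rangle$ simple of type $B_2$. Similarly, two long-root $SL_2$'s of $G_2$ at $120^\circ$ generate the subsystem subgroup $SL_3$ of type $A_2$, not all of $G_2$, although $\mathbb{Q}\alpha+\mathbb{Q}\beta$ meets $\Phi$ in all twelve roots. The repair is routine: replace $\Psi$ by the closed symmetric rank-two subsystem $\Psi'$ generated by $\{\pm\alpha,\pm\beta\}$; then $\langle K,L\rangle$ really is the subsystem subgroup $\langle X_\gamma : \gamma\in\Psi'\rangle$, $\Psi'$ is of type $A_1\times A_1$, $A_2$, $B_2$ or $G_2$, and your case analysis — including the length-ratio argument for part (3), which only uses that $\Psi'\subseteq\Phi$ contains roots in squared-length ratio $3$ — goes through verbatim with $\Psi'$ in place of $\Psi$.
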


\paragraph{Proof.} The proof follows from the description of closed
subgroups in simple algebraic groups normalised by a maximal torus
\cite[2.5]{seitz}; see also \cite[Section~3.1]{seitz2}.   \hfill $\Box$

\begin{fact}
Let\/ $G$ be a simple algebraic group over an algebraically
closed field  of characteristic $\neq p$,
and let $D$ be a maximal $p$-torus in $G$. Then
$C_G(D)$ is a maximal torus in $G$. \label{fact:p-torus}
\end{fact}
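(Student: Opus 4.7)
My plan is to exhibit a maximal torus $T$ of $G$ such that $D$ coincides with the full $p$-torsion subgroup $T_p$ of $T$, and then to deduce that $C_G(D)=C_G(T)=T$ via a Zariski-density argument, using that maximal tori are self-centralising in simple algebraic groups.

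First, I would show that $D$ lies inside some maximal torus $T$ of $G$. Because the characteristic of the base field is different from $p$, every element of $D$ is semisimple. As $D$ is abelian and consists of commuting semisimple elements, its Zariski closure $\overline{D}$ is a closed abelian subgroup of $G$ all of whose elements are semisimple; such a subgroup is of multiplicative type, hence diagonalisable, and therefore contained in some maximal torus $T$ of $G$.

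Next, let $T_p$ denote the full $p$-torsion subgroup of $T$. Since $T \cong (K^\times)^{\mathrm{rk}\, T}$ and $\mathrm{char}\, K \ne p$, we have $T_p \cong (\mathbb{Z}_{p^\infty})^{\mathrm{rk}\, T}$, so $T_p$ is itself a $p$-torus of $G$ containing $D$. Maximality of $D$ as a $p$-torus then forces $D = T_p$. Finally, I compute $C_G(D)$: the group $\mu_{p^\infty}$ is infinite in $K^\times$ (as $\mathrm{char}\, K \ne p$) and hence Zariski-dense in $K^\times$, so $T_p$ is Zariski-dense in $T$. Since centralisers are Zariski-closed, this yields $C_G(D) = C_G(T_p) = C_G(T)$. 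The standard fact that a maximal torus in a (connected, reductive, in particular simple) algebraic group is self-centralising then gives $C_G(T) = T$, and we are done.

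The genuine content of the argument lies in the first step — observing that a divisible abelian $p$-subgroup in characteristic $\ne p$ is forced into a maximal torus because it is a diagonalisable subgroup; everything else is a matter of Zariski density plus a standard property of reductive groups. So I would expect no real obstacle, provided one is willing to invoke the structure theorem for closed subgroups of multiplicative type and the self-centralising property of maximal tori.
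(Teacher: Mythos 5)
Your route is genuinely different from the paper's: the authors dispose of this assertion in one line by citing the Springer--Steinberg description of centralisers of sets of commuting semisimple elements \cite[Theorem~5.5.8]{ste1}, whereas you give a self-contained argument (embed $D$ in a maximal torus $T$, identify $D$ with the full $p$-torsion subgroup $T_p$ by maximality of $D$, then use Zariski density of $T_p$ in $T$ together with the self-centralising property of maximal tori). Your version buys transparency and avoids a black box; the paper's buys brevity and shifts all the algebraic-group content onto a standard reference.

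There is, however, one step that is not justified as you state it: ``diagonalisable, therefore contained in some maximal torus'' is false for general diagonalisable subgroups. A finite abelian subgroup consisting of commuting semisimple elements is of multiplicative type, hence diagonalisable, but need not be toral: the Klein four-subgroup of $PGL_2$ generated by the images of $\mathrm{diag}(1,-1)$ and the antidiagonal involution is the standard counterexample, and nontoral elementary abelian subgroups occur in many simple groups. What rescues your argument is precisely the divisibility of $D$, which you do not invoke at this point. The component group $\overline{D}/\overline{D}^{\,\circ}$ is finite and receives a dense --- hence surjective --- image of the divisible group $D$; a finite divisible group is trivial, so $\overline{D}$ is connected. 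A connected diagonalisable group is a torus, and tori do lie in maximal tori. With that repair the remaining steps (maximality forcing $D=T_p$ since $T_p\cong(\mathbb{Z}_{p^\infty})^{\mathrm{rk}\,T}$ in characteristic $\ne p$, density of $\mu_{p^\infty}$ in $K^\times$ giving density of $T_p$ in $T$, closedness of centralisers yielding $C_G(D)=C_G(T)$, and $C_G(T)=T$) all go through.
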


\begin{proof} The proof follows from the description of
centralisers of  subgroups  of commuting semisimple elements in
simple algebraic groups \cite[Theorem~5.5.8]{ste1}.
\end{proof}

\subsection{Groups of Finite Morley Rank}

\begin{fact}
\label{zilb} {\rm (Zil'ber's Indecomposability Theorem)}
A subgroup of a
group of finite Morley rank which is generated by a family
of definable connected subgroups is also definable
and connected.
\end{fact}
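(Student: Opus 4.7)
\medskip

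\noindent\textbf{Proof proposal.} The plan is to deduce the statement from the general form of Zil'ber's Indecomposability Theorem: that a family of \emph{indecomposable} definable subsets of $G$ all containing the identity generates a definable connected subgroup which is equal to a finite product of members of the family. Granting that general form, one only has to verify the indecomposability hypothesis for the given family of connected definable subgroups.

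Recall that a definable subset $X \subseteq G$ containing the identity is \emph{indecomposable} if, for every definable subgroup $K \leq G$, the image of $X$ in $G/K$ is either a singleton or infinite. Any definable connected subgroup $H$ of $G$ is indecomposable: if $H \not\subseteq K$, then $H \cap K$ is a proper definable subgroup of the connected group $H$ and hence has infinite index in $H$, so the image of $H$ in $G/K$ is infinite. Thus every member of the given family is indecomposable, and the general theorem delivers that $\langle H_i : i \in I\rangle$ is definable, connected, and equal to a finite product $H_{i_1} \cdots H_{i_n}$.

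To prove the general form I would choose a finite product $Y = X_{i_1} \cdots X_{i_n}$ drawn from the family whose Morley rank $r$ is maximal (bounded by $\mathrm{rk}(G)$), and subject to that, whose Morley degree is minimal. For any further generator $X_j$ from the family, the product $YX_j$ again lies in the family and, since $1 \in X_j$, contains $Y$; by maximality $\mathrm{rk}(YX_j) = r$, and minimality of degree then forces the set $YX_j \setminus Y$ to have rank strictly less than $r$. Passing to the definable subgroup $H = \{g \in G : \mathrm{rk}(Yg \triangle Y) < r\}$, one obtains $X_j \subseteq H$ for every $j$, so $H$ contains $\langle X_i : i \in I\rangle$. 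A closer analysis then identifies $Y$ with the connected component of $H$, showing that $Y$ itself is a definable connected subgroup equal to the group generated.

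The main obstacle is precisely this last step: promoting the approximate invariance of $Y$ under translation by the $X_j$ to the conclusion that $Y$ is already a subgroup. This is handled through the theory of stabilizers of generic types in groups of finite Morley rank: every definable set of maximal rank has a well-defined finite collection of generic types, whose stabilizer is a definable subgroup, and Morley rank additivity together with indecomposability of each $X_j$ forces this stabilizer to contain every generator. This stabilizer calculation is the finite Morley rank analogue of the classical algebraic-geometric fact that an irreducible constructible set closed under multiplication is automatically a subgroup, and it is where the theorem genuinely earns Zil'ber's name.
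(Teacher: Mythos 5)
The paper offers no proof of this assertion at all: it is a quoted background fact, referred to Zil'ber's original article and to Corollary~5.28 of Borovik--Nesin. Your opening reduction is precisely how that corollary is deduced from the indecomposability theorem for definable sets, and it is correct: if $H$ is definable and connected and $K\leqslant G$ is definable, then either $H\leqslant K$ or $H\cap K$ is a proper definable subgroup of $H$, hence of infinite index, so the image of $H$ in $G/K$ is a single point or infinite. Had you stopped there and cited the general theorem, you would be exactly in step with the paper.

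The trouble lies in your proof of the general theorem, where there is a genuine gap. Choosing $Y$ of maximal rank $r$ and then of \emph{minimal} degree does not force $\mathrm{rk}(YX_j\setminus Y)<r$. From $Y\subseteq YX_j$ and $\mathrm{rk}(YX_j)=r$ you get $\deg(YX_j)\geqslant\deg(Y)$ automatically, and minimality of $\deg(Y)$ among maximal-rank products yields the \emph{same} inequality a second time; nothing bounds $\deg(YX_j)$ from above. (Maximal degree would give the needed bound, but the degrees of rank-$r$ products need not be bounded, so that extremal choice need not exist.) The tell is that this part of your argument never uses indecomposability of the generators: take $X=\{1,g\}$ with $g$ of infinite order; then $r=0$, the product $Y$ of minimal degree still has $\mathrm{rk}(YX\setminus Y)=0=r$, and indeed $\langle X\rangle$ is not definable. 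Indecomposability must enter exactly at this step. The standard mechanism --- the one your final paragraph gestures at without executing --- is the stabilizer of a generic type $p$ of $Y$: each translate $bp$ with $b\in X_j$ concentrates on $X_jY$, which has rank $r$ and finite degree, so $X_j$ meets only finitely many cosets of $\mathrm{Stab}(p)$; indecomposability and $1\in X_j$ then give $X_j\subseteq\mathrm{Stab}(p)$, and a rank comparison finishes the proof. As written, you assert that this ``is handled through the theory of stabilizers of generic types'' and that ``a closer analysis'' identifies $Y$ with the connected component of $H$ --- but that analysis \emph{is} the theorem, and it is missing.
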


\begin{proof} See \cite{zilber} or Corollary~5.28 in \cite{bn}.
\end{proof}


\begin{fact} {\rm \cite[Theorem~8.4]{bn}}
Let\/  $G\rtimes H$ be a group of finite Morley rank, where $G$
and\/ $H$ are definable, $G$ is an infinite simple algebraic group
over an \acf and\/ $C_H(G)=1$. Then $H$ can be viewed as a
subgroup of the group of automorphisms of\/ $G$, and moreover\/
$H$ lies in the product of the group of inner automorphisms and
the group of graph automorphisms of $G$ (which preserve root
lengths). In particular, when $H$ is connected, then $H$ consists
of inner automorphisms only. \label{defaut}
\end{fact}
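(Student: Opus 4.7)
The plan is to translate the statement into a claim about definable automorphisms of $G$ and then combine the classical description of $\mathrm{Aut}(G)$ with the triviality of definable field automorphisms of a pure algebraically closed field.

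First, because $G \rtimes H$ has finite Morley rank with $G$ and $H$ definable, conjugation by each $h \in H$ is a definable automorphism of $G$, yielding a homomorphism $\varphi\colon H \to \mathrm{Aut}(G)$. The assumption $C_H(G) = 1$ forces $\varphi$ to be injective, so $H$ embeds as a subgroup of $\mathrm{Aut}(G)$; this already gives the first assertion of the statement.

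Second, I would invoke the Steinberg/Chevalley description of the abstract automorphism group of a simple algebraic group over an algebraically closed field $k$: every element of $\mathrm{Aut}(G)$ decomposes as a product of an inner automorphism, a graph automorphism arising from a symmetry of the Dynkin diagram (and in particular preserving root lengths), and a field automorphism induced by some $\sigma \in \mathrm{Aut}(k)$. To obtain the second assertion it therefore suffices to show that the field-automorphism component of $\varphi(h)$ is trivial for every $h \in H$. For this I would exploit that $k$ is interpretable inside the pure group $G$, for example via a one-dimensional root subgroup of a fixed torus, so any definable automorphism of $G$ induces a definable automorphism of $k$ as a pure algebraically closed field; by the standard fact that no pure algebraically closed field admits a nontrivial definable automorphism, the field component must be trivial. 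Consequently $H \leq \mathrm{Inn}(G) \rtimes \Gamma$, where $\Gamma$ is the finite graph-automorphism group.

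Finally, if $H$ is connected, the composition $H \hookrightarrow \mathrm{Inn}(G) \rtimes \Gamma \twoheadrightarrow \Gamma$ sends a connected group into a finite one and is therefore trivial, placing $H$ inside $\mathrm{Inn}(G)$. The main obstacle is the interpretability step: one must specify definable data in $G$ that recover the field $k$ together with its operations in an $H$-equivariant way, so that an element acting nontrivially through a field automorphism really induces a nontrivial definable automorphism of $k$. Once this interpretation is pinned down, the remaining assertions follow formally.
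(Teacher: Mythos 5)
The paper does not prove this assertion; it is quoted verbatim from \cite[Theorem~8.4]{bn}, so your attempt can only be measured against the standard argument given there. Your overall architecture matches that argument: conjugation gives an embedding $H\hookrightarrow\operatorname{Aut}(G)$ because $C_H(G)=1$; Steinberg's description of $\operatorname{Aut}(G)$ for a simple algebraic group over an algebraically closed field reduces everything to killing the field-automorphism component; and the connected case follows because the graph-automorphism group is finite. Those steps are fine (modulo the routine check that the subgroup of $H$ acting by inner-times-graph automorphisms is \emph{definable} of finite index, which you need in order to invoke connectedness).

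The genuine gap is in your key lemma. The claim that ``no pure algebraically closed field admits a nontrivial definable automorphism'' is false in positive characteristic: the Frobenius map $x\mapsto x^{p}$ is given by a polynomial, hence is definable, and it is a nontrivial automorphism of any algebraically closed field of characteristic $p$. Since the situation you must handle includes positive characteristic (the hypothesis of the surrounding theorem only excludes one prime), your argument as written does not rule out an element $h\in H$ acting on $G$ through a Frobenius twist. The correct ingredient, and the one used in \cite{bn}, is a statement about definable \emph{groups} of automorphisms: an algebraically closed field arising in a structure of finite Morley rank admits no nontrivial definable group of automorphisms. Its proof is genuinely different from what you propose: finite-order automorphisms are excluded via Artin--Schreier (a nontrivial automorphism of finite order would make the fixed field real closed, hence unstable), and infinite-order ones (such as Frobenius) are excluded because the group they would generate inside a definable group of finite Morley rank cannot exist there (an infinite cyclic group is not a group of finite Morley rank, and one must pass to definable hulls to get a contradiction). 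You need to replace your element-wise interpretability argument by this group-level argument, applied to the image of $H$ in $\operatorname{Aut}(G)/(\operatorname{Inn}(G)\cdot\Gamma)$, for the proof to go through.
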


\begin{fact}
{\rm \cite{ac}} \label{centext}
Suppose $G$ is a group of finite
Morley rank, $G=G'$, and\/ $G/Z(G)$ is a simple algebraic group
over an \acfd, and is of finite Morley rank, then $Z(G)$ is
finite and\/ $G$ is  also algebraic.
\end{fact}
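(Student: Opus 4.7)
The plan is to bootstrap from the algebraic structure on $\bar G:=G/Z(G)$ by lifting its Chevalley generators into $G$ and showing that the resulting Steinberg-type relations present $G$ as a quotient of the simply connected algebraic cover of $\bar G$ by a finite central subgroup. Write $\pi:G\to\bar G$ for the quotient map, fix a maximal torus $\bar T\leq\bar G$, the associated root system $\Phi$, and standard Chevalley generators $x_\alpha(t)$, $\alpha\in\Phi$, $t\in K$, where $K$ is the algebraically closed base field of $\bar G$.

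I would first lift each root subgroup $X_\alpha\leq\bar G$ to a definable one-parameter subgroup $U_\alpha\leq G$ with $\pi(U_\alpha)=X_\alpha$: the preimage $\pi^{-1}(X_\alpha)$ is definable, its connected component (definable by Assertion~\ref{zilb}) maps onto $X_\alpha$, and perfectness of $G$ trims away the central slice to leave $U_\alpha$ definably isomorphic to $X_\alpha$. Next I would verify that the Chevalley commutator relations, which hold exactly in $\bar G$, lift to $G$ modulo $Z(G)$; then I would upgrade them to exact relations in $G$ by adjusting the lifts within their $Z(G)$-cosets, exploiting $G=G'$ to rewrite each central error as a commutator and absorb it. The same mechanism would push the torus relations into standard form.

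Once this exact Steinberg-type presentation holds in $G$, the lifts generate a definable subgroup $H$ with $\pi(H)=\bar G$, so $G=HZ(G)$; perfectness then gives $G=H$, presenting $G$ as a quotient of the simply connected algebraic group of the type of $\bar G$ over $K$ by a central subgroup. Since the center of the simply connected cover is finite, $Z(G)$ is finite and the algebraic structure descends to $G$, with Fact~\ref{defaut} controlling how $G$ is realised inside its algebraic model.

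The main obstacle, I expect, is the upgrade from approximate to exact Steinberg relations on the torus. The Steinberg symbols $\{a,b\}$ arising from torus elements are, in the purely abstract category, obstructed by $K_2(K)$, which for an algebraically closed field is a large uniquely divisible group --- so their triviality here cannot be deduced from abstract group theory alone. It must instead be forced by the finite Morley rank hypothesis: the rigidity of definable tori and the absence of infinitely divisible definable central sections in a group of finite Morley rank eliminate the $K_2$-style pathologies and collapse the kernel of the covering map onto the finite algebraic centre. Navigating this rigidity argument carefully is where the real work lies.
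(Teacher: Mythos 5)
First, a point of order: the paper does not prove Assertion~\ref{centext} at all; it is imported as a black box from Alt{\i}nel and Cherlin \cite{ac}. So the only meaningful comparison is with that paper's argument, whose general shape --- lift a Steinberg-type generating system from $G/Z(G)$, recognise $G$ as a quotient of the universal central extension, and confront the $K_2$-part of the Schur multiplier --- your sketch does correctly identify. The problem is that at the step you yourself single out as ``where the real work lies,'' the mechanism you offer is not a valid one, so the sketch does not amount to a proof.

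Concretely, two gaps. (i) You propose to kill the $K_2(K)$-obstruction by invoking ``the absence of infinitely divisible definable central sections in a group of finite Morley rank.'' No such principle exists: divisible abelian groups of finite Morley rank are ubiquitous ($(\mathbb{Q},+)$ has Morley rank $1$, and $K^\times$ for $K$ algebraically closed is divisible of finite Morley rank), and nothing general forbids them from sitting centrally. In the relevant generality --- a perfect group of finite Morley rank over a simple algebraic quotient --- the non-existence of an infinite divisible centre is essentially \emph{equivalent} to the statement being proved, so appealing to it is circular. Since $K_2(K)$ is uniquely divisible and torsion-free, no crude torsion or divisibility argument can dispose of it; what \cite{ac} actually exploits is the definability of the extension data (the lifted root and torus subgroups and the resulting symbol maps) together with the Steinberg relations to force the symbols to vanish, and that argument is entirely missing from your sketch. (ii) In the middle paragraph you claim the central errors in the lifted Chevalley commutator relations can be absorbed by re-choosing lifts within their $Z(G)$-cosets, ``exploiting $G=G'$ to rewrite each central error as a commutator.'' This is again circular: the obstruction to such a renormalisation is exactly the class of the extension in $H^2(G/Z(G),Z(G))$, i.e.\ the very Schur multiplier whose finiteness is at issue; perfectness gives you that $G$ is a quotient of the universal central extension, not a licence to cancel central discrepancies relation by relation. (A smaller repair: the lifts $U_\alpha$ are not obtained because ``perfectness trims away the central slice,'' but by the standard device $U_\alpha=[\pi^{-1}(X_\alpha)^\circ,h]$ for a lifted torus element $h$ acting on $X_\alpha$ without nontrivial fixed points and trivially on $Z(G)$.)
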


\begin{lemma}
Let\/ $G$ be a connected  $K$-group of\/ $p'$-type  and\/ $D$  a
maximal\/ $p$-torus in $G$. If\/ $L \lhd G$ is a component in $G$,
then $D\cap L$ is a maximal\/ $p$-torus in $L$ and\/ $D=C_D(L)(D\cap L)$.
\label{lm:torus-in-component}
\end{lemma}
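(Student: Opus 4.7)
The plan is to promote $L$ to a genuine algebraic group, locate a maximal torus of $L$ that is centralised by $D$, and read off both conclusions from there.

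As a component of the connected $K$-group $G$, $L$ is quasi-simple with $L/Z(L)$ simple algebraic, so Fact~\ref{centext} promotes $L$ itself to an algebraic group with finite centre $Z(L)$. Since $L$ inherits $p'$-type from $G$, the defining field has characteristic different from $p$, placing us in the setting of Fact~\ref{fact:p-torus}. The $p$-torus $D$ is divisible, hence connected, and normalises $L$; so by Fact~\ref{defaut} the induced action of $D$ on the simple algebraic group $L/Z(L)$ is by inner automorphisms. Thus for each $d\in D$ there is $l\in L$ inducing the same automorphism on $L/Z(L)$, so $dl^{-1}$ centralises $L/Z(L)$; connectedness of $L$ together with finiteness of $Z(L)$ forces $dl^{-1}\in C_G(L)$, giving $D\subseteq L\cdot C_G(L)$ and a well-defined homomorphism $D\to L/Z(L)$, $d\mapsto \bar l$, whose image is an abelian group of $p$-semisimple elements.

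Such a subgroup lies in a maximal torus of $L/Z(L)$; let $T$ be its full preimage in $L$, a maximal torus of $L$, and rechoose $l\in T$ in the decomposition $d=lc$. For $x\in T$ we then have $dxd^{-1}=lxl^{-1}=x$ (using $c\in C_G(L)$ and $T$ abelian), so $D$ centralises $T$, and in particular commutes with the $p$-torsion $T_p$ of $T$, which is a maximal $p$-torus of $L$ in view of Fact~\ref{fact:p-torus}. The abelian $p$-torus $D\cdot T_p$ contains $D$, so the maximality of $D$ forces $T_p\subseteq D$, whence $T_p\subseteq D\cap L$. Conversely, $D\cap L$ is an abelian $p$-subgroup of the algebraic group $L$, hence of semisimple elements, and so lies in some maximal $p$-torus $T_p'$ of $L$; the inclusion $T_p\subseteq T_p'$ between divisible groups of equal Pr\"{u}fer rank forces $T_p=T_p'$, whence $D\cap L=T_p$ is a maximal $p$-torus of $L$.

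For the factorisation $D = C_D(L)(D\cap L)$, take $d\in D$ and use the refined form $d = tc$ with $t\in T$ and $c\in C_G(L)$. Since $d$ has $p$-power order and $t,c$ commute, $t^{p^n}=c^{-p^n}\in T\cap C_G(L)\subseteq Z(L)$ for some $n$, so $t$ is torsion. In the $p$/$p'$-decomposition $t = t_pt_{p'}$ inside $T$, the element $t_{p'}$ has $p^n$-th power in $Z(L)$; as $p^n$ is invertible modulo the order of $t_{p'}$, we get $t_{p'}\in Z(L)\subseteq C_G(L)$. Hence $d = t_p\cdot(t_{p'}c)$ with $t_p\in T_p=D\cap L$ and $t_{p'}c\in D\cap C_G(L)=C_D(L)$. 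The main delicacy is the arrangement for a maximal torus of $L$ to be genuinely centralised --- not merely normalised --- by $D$; careful bookkeeping with the finite centre $Z(L)$ accompanies every step.
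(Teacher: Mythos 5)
Your argument is correct and is essentially the paper's own proof written out in full: the paper disposes of this lemma in one line by citing Assertions~\ref{centext}, \ref{defaut} and \ref{fact:p-torus}, and your chain (promote $L$ to an algebraic group, make the $D$-action inner via connectedness, centralise a maximal torus, compare $p$-torsion, then extract the factorisation $d=t_p\cdot(t_{p'}c)$) is exactly the intended unpacking of those three facts. The one point to tighten is that in two places you justify containment in a maximal torus by the subgroup being abelian and consisting of semisimple elements, which is not sufficient in general (the Klein four-group in $PGL_2(\mathbb{C})$ consists of commuting semisimple elements lying in no torus); the correct justifications are that the image of $D$ is \emph{divisible}, hence a $p$-torus and so contained in a maximal torus, and that $D\cap L$ centralises the maximal $p$-torus $T_p$ you have already produced, hence lies in the maximal torus $C_L(T_p)$ supplied by Assertion~\ref{fact:p-torus}.
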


\begin{proof} As $G$ is connected, $L \triangleleft G$. Now the lemma
immediately follows from Assertions~\ref{centext}, \ref{defaut} and
\ref{fact:p-torus}. \end{proof}

\begin{lemma}
\label{algcomps} Under the assumptions of Theorem~{\rm
\ref{sis-kebap}}, we have, for every $p$-element\/ $t\in D$,
\[C^\circ_G(t)=F\cdot L_1\cdots L_r,\]
where $F=F^\circ(C_G^\circ(t))$ and each\/ $L_i$ is a simple
algebraic group over an algebraically closed field of
characteristic $ \ne p$.
\end{lemma}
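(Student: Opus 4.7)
The plan is to reduce to the order-$p$ case handled by assumption (B) by replacing $t$ with $x := t^{p^{n-1}}$ (where $p^n$ is the order of $t$; if $|t|=p$ one just takes $x = t$), and then to compute $C^\circ_G(t)$ inside the already-decomposed larger group $H := C^\circ_G(x)$. Since $D$ is a connected abelian group containing $t$, we have $D \leq C^\circ_G(t) \leq H$, and because $G$ is simple and $x \neq 1$, $H$ is a proper connected definable subgroup of $G$ containing $D$. By (A), $H$ is a $K$-group; by (B), $H = F^\circ(H)\cdot E(H)$ with $E(H) = M_1 \cdots M_s$ and the components $M_j$ quasi-simple. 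The $K$-group property makes $M_j/Z(M_j)$ a simple algebraic group, and Assertion~\ref{centext} upgrades $M_j$ itself to a quasi-simple algebraic group, of characteristic different from $p$ by the $p'$-type condition. Since $C^\circ_G(t)$ is connected and contained in $C_G(x)$, we have $C^\circ_G(t) = C^\circ_H(t)$.

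Next I would analyze the action of $t$ on each $M_j$. By Lemma~\ref{lm:torus-in-component}, $D = C_D(M_j)(D \cap M_j)$, so write $t = c_j s_j$ with $c_j \in C_D(M_j)$ and $s_j \in D \cap M_j$. The conjugation action of $t$ on $M_j$ agrees with that of $s_j$, hence $C_{M_j}(t) = C_{M_j}(s_j)$. Since $M_j$ has characteristic different from $p$, the $p$-element $s_j$ is semisimple, and standard algebraic group theory identifies $C^\circ_{M_j}(s_j)$ as a connected reductive group, which decomposes as a central product $T_j \cdot L_{j,1} \cdots L_{j,k_j}$ of a central torus with quasi-simple algebraic groups of characteristic different from $p$.

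Finally, passing to the quotient $\overline{H} = H/Z(H)$ turns the central product $H = F^\circ(H)\cdot M_1\cdots M_s$ into a direct product, so $C_{\overline H}(\overline t)$ splits accordingly as a direct product of centralizers in the individual factors. Pulling back to $H$ and taking connected components yields $C^\circ_H(t) = C^\circ_{F^\circ(H)}(t) \cdot \prod_j C^\circ_{M_j}(s_j)$. The subgroup $C^\circ_{F^\circ(H)}(t) \cdot \prod_j T_j$ is connected, nilpotent, and normal in $C^\circ_H(t)$, hence contained in $F^\circ(C^\circ_G(t))$, while the quasi-simple factors $L_{j,i}$ are the simple algebraic groups promised by the lemma. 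The main obstacle will be this final factorization of $C^\circ_H(t)$ along the central-product structure of $H$: the decomposition of elements in a central product is unique only modulo the (finite) central intersections, and one must track these carefully to confirm that the centralizer genuinely decomposes along the factors rather than merely up to a finite central correction.
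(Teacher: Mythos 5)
Your proof is correct in outline, but it is a genuinely different --- and substantially more complete --- argument than the one in the paper. The paper's proof is two sentences: it asserts outright that $C^\circ_G(t)=F\cdot E(C^\circ_G(t))$ holds ``for every $p$-element $t$'', and then observes that $C^\circ_G(t)$ is a $K$-group (it contains $D$ and is proper), so its components are algebraic by Assertion~\ref{centext}. In other words, the paper silently extends assumption (B) from elements of order $p$ to arbitrary $p$-elements of $D$ without saying how; your proposal is essentially the missing justification for that step. Your reduction --- pass to $x=t^{p^{n-1}}$ of order $p$, note $C^\circ_G(t)=C^\circ_{H}(t)$ for $H=C^\circ_G(x)$, decompose $H=F^\circ(H)\cdot M_1\cdots M_s$ via (B) and Assertion~\ref{centext}, split $t=c_js_j$ using Lemma~\ref{lm:torus-in-component}, and invoke the reductivity of connected centralisers of semisimple elements in each $M_j$ --- is sound, and the central-product bookkeeping you flag does close: the map $h\mapsto[h,t]$ shows that $C_{F^\circ(H)}(t)\prod_jC_{M_j}(s_j)$ has finite index in $C_H(t)$ (the defect embeds in the finite central intersections), so passing to connected components gives the clean factorisation, after which your nilpotent piece lands in $F^\circ(C^\circ_G(t))$ and the quasi-simple pieces are the $L_i$. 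What your approach buys is an actual proof of the lemma as stated for all $p$-elements, at the cost of invoking Steinberg-type structure theory for centralisers of semisimple elements; what the paper's version buys is brevity, at the cost of leaving the order-$p^k$ case ($k\geqslant 2$) unargued. One cosmetic remark: the paper attributes the $K$-group property of $C^\circ_G(t)$ to assumption (C), which appears to be a slip for assumption (A) --- your use of (A) is the right one.
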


\begin{proof} For every $p$-element $t$ in $G$, $C^\circ_G(t)=F\cdot
E(C^\circ_G(t))$. And assumption (C) ensures that
$C^\circ_G(t)$ is a $K$-group, hence its components are algebraic
groups by Assertion~\ref{centext}. \end{proof}

\subsection{Lyons's Theorem}

A detailed discussion of this particular version
of Lyons's theorem can be found in \cite{berkman}.

\begin{fact}[(Lyons \cite{gls,gls3})]
\label{Lyons} Suppose that\/ ${\Bbb F}$ is an algebraically closed
field, $I$ is one of the connected Dynkin diagrams of the simple
algebraic groups of Tits rank at least\/ $3$ and $\tilde{G}$ is
the simply connected simple algebraic group  of type $I$ over
${\Bbb F}$. Let\/ $G$ be an arbitrary group and for each $i\in I$,
$K_i$ stand for a subgroup of $G$ which is centrally isomorphic to
$PSL_2({\Bbb F})$, and\/ $T_i <  K_i$ denote a maximal torus in
$K_i$. Also assume that the following statements hold.
\begin{enumerate}
\item The group $G$ is generated by $K_i$ where $i\in I$.

\item For all\/ $i,j \in I$, $[T_i,T_j]=1$.

\item If\/ $i\ne j$ and $(i,j)$ is not an edge
in $I$, then $[K_i,K_j]=1$.

\item If\/ $(i,j)$ is a single edge in $I$, then
$G_{ij} =\langle K_i,K_j\rangle$ is isomorphic to
$(P)SL_3({\Bbb F})$.

\item If\/ $(i,j)$ is a double edge in $I$, then
$G_{ij}=\langle K_i,K_j\rangle$ is isomorphic to
$(P)Sp_4({\Bbb F})$.
Moreover, in that case, if\/ $r_i\in N_{K_i}(T_iT_j)$ and
$r_j\in N_{K_j}(T_iT_j)$ are involutions, then
the order of\/ $r_ir_j$ in $N_{G_{ij}}(T_iT_j)/T_iT_j$
is $4$.

\item For all $i,j\in I$, $K_i$ and $K_j$
are root\/ $SL_2$-subgroups of\/ $G_{ij}$
corresponding to the maximal torus $T_iT_j$ of\/ $G_{ij}$.
\end{enumerate}

Then there is a homomorphism from $\tilde{G}$ onto
$G$, under which the root $SL_2$-subgroups of $\tilde{G}$
{\rm (}for some simple root system{\rm )} correspond to the subgroups $K_i$.
\end{fact}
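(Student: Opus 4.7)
The plan is to construct an explicit homomorphism $\varphi: \tilde{G} \to G$ by exhibiting in $G$ a system of root elements satisfying the Steinberg presentation of $\tilde{G}$. Inside each $K_i$, use the central isomorphism with $PSL_2({\Bbb F})$ to pick a pair of root homomorphisms $x_{\pm\alpha_i}: {\Bbb F} \to K_i$ together with a Weyl-group representative $n_{\alpha_i} \in K_i$, all compatible with the fixed torus $T_i$, where $\alpha_i$ denotes the simple root of $\tilde G$ indexed by $i \in I$. For each edge $(i,j)$ of $I$, the identification of $G_{ij}$ with $(P)SL_3({\Bbb F})$ or $(P)Sp_4({\Bbb F})$ provided by (4)--(6) pins down root elements $x_\beta$ for all roots $\beta$ in the rank-two subsystem spanned by $\alpha_i,\alpha_j$; the extra order-four condition in (5) fixes the Cartan coefficients in the double-edge case, distinguishing $(P)Sp_4$ from its variants. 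Hypothesis (2) guarantees that $T=\prod_i T_i$ is a well-defined abelian subgroup serving as the ambient maximal torus.

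With these rank-one and rank-two data assembled, extend by Weyl conjugation to propose a root element $x_\alpha: {\Bbb F} \to G$ for every $\alpha \in \Phi$. The central task is to verify the Chevalley commutator formula
\[
[x_\alpha(t), x_\beta(u)] = \prod_{r,s > 0} x_{r\alpha + s\beta}\bigl(c_{rs}\, t^r u^s\bigr)
\]
for every pair $\alpha,\beta \in \Phi$. Any such pair spans a rank-two subsystem of $\Phi$, and because the Tits rank of $I$ is at least $3$, that subsystem embeds into a rank-three subsystem realised inside some $\langle K_i,K_j,K_k\rangle$ indexed by a connected three-node subdiagram of $I$. The required commutator identity thus reduces to an identity inside a rank-three Chevalley group, which in turn follows from the three rank-two identifications attached to the edges of that subdiagram together with the commutation of non-adjacent $K$'s supplied by (3). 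This is the Curtis--Tits mechanism: when the rank is at least $3$, the global Steinberg presentation is generated by rank-one and rank-two data subject only to consistency on three-node subdiagrams.

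Once every Steinberg relation has been checked, the universal property of the presentation produces a homomorphism $\varphi: \tilde{G}\to G$ sending the standard root $SL_2$-subgroups of $\tilde G$ (for the simple root system $\{\alpha_i: i\in I\}$) to the subgroups $K_i$, and surjectivity is immediate from hypothesis (1). The main obstacle is precisely the consistency argument inside the middle paragraph: one must show that root elements defined via different reduced Weyl-group expressions, or through different three-node subdiagrams, agree. The rank-$\geq 3$ assumption on $I$ is indispensable here, since only in the presence of a third simple root can the pairwise rank-two data be forced to glue into a coherent global system of Chevalley generators; the extra condition on the order of $r_ir_j$ in (5) is exactly what is needed to rule out the wrong gluing in the double-edge case.
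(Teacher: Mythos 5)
The paper does not prove this statement at all: it is imported as an Assertion attributed to Lyons, with a pointer to Gorenstein--Lyons--Solomon \cite{gls,gls3} and to \cite{berkman} for a discussion of this particular formulation. So there is no internal proof to compare yours against; the only fair comparison is with the cited proof, whose strategy is indeed the Curtis--Tits/Steinberg-presentation mechanism you describe. Your outline correctly identifies the shape of that argument: build root homomorphisms and Weyl representatives inside the $K_i$ and the rank-two groups $G_{ij}$, and invoke the universal property of the Steinberg presentation of $\tilde G$, with surjectivity coming from hypothesis (1) and the order-four condition in (5) ruling out the wrong identification of the two $SL_2$'s inside $(P)Sp_4$.

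However, as a proof your text has a genuine gap, and you name it yourself: the consistency of the globally defined root elements. The reduction you assert --- that the Chevalley commutator relation for an arbitrary pair $\alpha,\beta\in\Phi$ ``reduces to an identity inside a rank-three Chevalley group'' realised as some $\langle K_i,K_j,K_k\rangle$ --- is not justified and is not literally true as stated: an arbitrary pair of positive roots need not lie in the subsystem generated by a connected three-node subdiagram of simple roots, and conjugating such a pair into standard position uses Weyl representatives whose well-definedness (independence of the reduced expression) is precisely what is in question. Establishing that the pairwise data over edges of $I$ glue coherently is the entire mathematical content of the theorem; in the literature it is handled either by a careful induction on rank with explicit signs in the structure constants (as in GLS) or by the simple connectivity of an associated amalgam/geometry in rank at least $3$. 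Naming this step ``the Curtis--Tits mechanism'' and observing that it is the main obstacle does not discharge it. For the purposes of this paper the correct move is the one the authors make: cite the result rather than reprove it.
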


\subsection{Reflection Groups}

A linear semisimple transformation of finite order is called a
{\it reflection} if it has exactly one eigenvalue which is not 1.

\begin{theorem}
Let\/ $W$ be a finite group and assume that the following
statements hold.
\begin{enumerate}
\item There is a normal subset $S\subseteq W$ consisting of involutions
and generating $W$.
\item Over $\Bbb C$, $W$ has a faithful irreducible
representation of dimension $n\geqslant 3$ in which involutions from $S$ act as reflections.
\item For almost all prime numbers $q$, $W$ has faithful irreducible
representations {\rm(}possibly of different dimensions{\rm )} over
fields\/ ${\Bbb F}_q$. Moreover, for every such representation,
involutions in $S$ act as reflections.
\end{enumerate}
Then $W$ is one of the
groups $A_n$, $B_n$, $C_n$, $D_n$, $E_6$, $E_7$, $E_8$, $F_4$ for
$n\geqslant 3$.
\label{reflection}
\end{theorem}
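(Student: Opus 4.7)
The plan is to combine the Shephard--Todd classification of finite complex reflection groups with a modular-representation argument that rules out the two non-crystallographic Coxeter groups $H_3$ and $H_4$.

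First, condition (2) realises $W$ as a finite irreducible complex reflection group of dimension $n\geqslant 3$, generated by the conjugation-closed set $S$ of involutive reflections. Inspection of the Shephard--Todd list shows that a finite irreducible complex reflection group generated by reflections of order $2$ is (the complexification of) a finite irreducible real Coxeter group. Together with $n\geqslant 3$, this restricts $W$ to the list
\[
A_n\;(n\geqslant 3),\quad B_n=C_n\;(n\geqslant 3),\quad D_n\;(n\geqslant 4),\quad E_6,\; E_7,\; E_8,\; F_4,\; H_3,\; H_4.
\]

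The remaining task is to exclude $H_3$ and $H_4$ by exploiting condition (3). The key arithmetic fact is that the natural reflection representations of $H_3$ and $H_4$ take character values in $\Bbb Q(\sqrt 5)\setminus\Bbb Q$ and are not realised over $\Bbb Q$. By quadratic reciprocity, the set of primes $q$ for which $5$ is a non-residue modulo $q$ is infinite. For such $q$ coprime to $|W|$, the field $\Bbb F_q$ does not contain $\sqrt 5$, and so the Galois-conjugate pair of absolutely irreducible $\overline{\Bbb F}_q$-reflection representations (of dimension $3$ or $4$) descends to a single irreducible $\Bbb F_q$-representation of twice that dimension, in which each involution of $S$ acts with two $(-1)$-eigenvalues and hence fails to be a reflection. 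A short inspection of the remaining ordinary irreducible characters of $H_3$ and $H_4$, together with standard modular reduction, then shows that none of their other irreducible $\Bbb F_q$-representations is simultaneously faithful and has every element of $S$ acting as a reflection: the other faithful representations either contain some $s\in S$ in their kernel (so they are not faithful, since $S$ normally generates $W$) or exhibit more than one $(-1)$-eigenvalue on some $s\in S$. Since this obstruction persists for an infinite set of primes, it contradicts the ``almost all'' clause of (3).

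The main obstacle is this last verification: showing that for infinitely many primes $q$, \emph{no} faithful irreducible $\Bbb F_q$-representation of $H_3$ or $H_4$ has all involutions of $S$ acting as reflections. The Shephard--Todd reduction and the enumeration of rank-$\geqslant 3$ irreducible Coxeter groups are classical, but the arithmetic of $\Bbb Q(\sqrt 5)$-valued characters, combined with a careful case analysis of the (few) faithful irreducible representations of $H_3$ and $H_4$ in each relevant characteristic, is where the substantive work lies.
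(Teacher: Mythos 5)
Your overall strategy --- reduce via the Shephard--Todd classification using condition (2), then use the mod-$q$ representations of condition (3) to kill the non-crystallographic survivors --- is exactly the approach the paper points to (it gives no proof itself, citing \cite{berkman} for ``a proof based on the classification of irreducible complex reflection groups''). However, your first reduction step contains a genuine error. It is \emph{not} true that an irreducible complex reflection group generated by reflections of order $2$ is the complexification of a real Coxeter group. In rank at least $3$ the Shephard--Todd list contains the groups $G(m,m,n)$ with $m\geqslant 3$, $n\geqslant 3$, as well as the exceptional groups $G_{24}$, $G_{27}$, $G_{29}$, $G_{31}$, $G_{33}$, $G_{34}$: all of these are generated by order-$2$ reflections, none is real (their reflection characters have fields such as $\mathbb{Q}(\zeta_m)$, $\mathbb{Q}(\sqrt{-7})$, $\mathbb{Q}(i)$, $\mathbb{Q}(\zeta_3)$), and none appears in the conclusion of the theorem. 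So after applying condition (2) you are left not merely with $H_3$ and $H_4$ to exclude but with this further infinite family and six exceptional groups, about which your proof says nothing. The same irrational-character-field mechanism you deploy against $H_3$ and $H_4$ does apply to them, but it must actually be invoked, and the ``short inspection'' you defer --- that for infinitely many $q$ \emph{no} faithful irreducible $\mathbb{F}_q$-representation has all of $S$ acting as reflections --- is precisely the substantive content and is only asserted, both for $H_3$, $H_4$ and (implicitly) for the omitted groups.

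One way to repair and streamline the argument: for $q\nmid |W|$, an irreducible $\mathbb{F}_q$-module on which each $s\in S$ has exactly one eigenvalue $-1$ is automatically absolutely irreducible (otherwise, extending scalars, $s$ would have to act trivially on all but one Galois conjugate; Galois-conjugate representations have the same kernel, so $s$, and hence $W=\langle S\rangle$, would act trivially, contradicting faithfulness). Hence it lifts to a faithful complex irreducible reflection representation whose character field embeds in $\mathbb{F}_q$. Since $W$ has only finitely many such representations, condition (3) together with Chebotarev-type considerations forces one of them to be realisable over $\mathbb{Q}$, and finite irreducible rational reflection groups of rank $\geqslant 3$ are exactly $A_n$, $B_n=C_n$, $D_n$, $E_6$, $E_7$, $E_8$, $F_4$. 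Whichever route you take, as written the proposal has a gap: the non-real entries of the Shephard--Todd list are never eliminated, and the per-prime case analysis is not carried out.
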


\begin{proof} A proof, based on the classification of irreducible
complex reflection groups \cite{shephard-todd}, can be found in
\cite{berkman}. \end{proof}

\section{Proof of  Theorem~\ref{sis-kebap}}
\label{sec:sis-kebap}

The strategy is to construct the Weyl group and the root system
of $G$, and then to apply Lyons's Theorem. So from now on $G$ is
a simple group of \fmr and $D$ is a maximal $p$-torus in $G$
of Pr\"{u}fer rank $\geqslant 3$ and such that every proper definable connected subgroup contatining $D$ is a $K$-group. We also assume that
$C^\circ_G(x)$ is  of $p'$-type for every element $x \in D$ of
order $p$, $C^\circ_G(x)=F^\circ(C^\circ_G(x))E(C^\circ_G(x)) $
and
\[G = \langle C^\circ_G(x) \mid x \in D, \; |x|=p\rangle.\]

Notice that if $M$ is a proper definable subgroup in $G$ normalised by $D$ then $MD$ is a proper definable subgroup of $G$, for otherwise $M$ would be normal in $G$, which contradicts simplicity of $G$. Therefore, $MD$ and hence $M$ are  $K$-groups by assumption (A). This observation will be used throughout the proof.

We also shall systematically use the following observation.
\begin{lemma}
 $F^\circ(C^\circ_G(x))$ centralises
$D$ for every  element $x \in D$ of order $p$. \label{lm:F}
\end{lemma}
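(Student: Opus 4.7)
The approach is to exploit the central-product decomposition $C^\circ_G(x)=F\cdot E$, where $F:=F^\circ(C^\circ_G(x))$ and $E:=E(C^\circ_G(x))$, and to split $D$ into a piece inside $E$ and a piece centralising $E$. First I would note that $D$, being a connected abelian definable subgroup containing $x$, lies in $C^\circ_G(x)$, and that maximality of $D$ in $G$ makes it a maximal $p$-torus of $C^\circ_G(x)$. Since $x\neq 1$ forces $C^\circ_G(x)$ to be a proper subgroup of the simple group $G$, assumption~(A) shows that $C^\circ_G(x)$ is a $K$-group, and Lemma~\ref{lm:torus-in-component} applied to each simple algebraic component $L_i$ of $E=L_1\cdots L_r$ (given by Lemma~\ref{algcomps}) yields $D=C_D(L_i)(D\cap L_i)$.

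A short induction on $r$, using that distinct components commute in the central product and hence $D\cap L_j\leqslant C_D(L_i)$ for $j\neq i$, assembles these pointwise decompositions into $D=C_D(E)\cdot (D\cap E)$. Since $[F,E]=1$ by the central-product structure, $F$ centralises $D\cap E$ immediately, and the task reduces to showing that $F$ centralises the complementary subtorus $D_Z:=C_D(E)$.

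For this I would first locate $D_Z$ inside $F$. Any element of $C^\circ_G(x)=F\cdot E$ centralising $E$ lies in $F\cdot Z(E)$; since each $L_i$ is a simple algebraic group with finite centre, $Z(E)$ is finite, so the connected group $D_Z$ is forced into $F$ itself. Now $D_Z$ is a $p$-torus in the connected definable nilpotent group $F$, and the standard structure theorem for connected nilpotent groups of finite Morley rank (the torsion subgroup is central, see \cite[Ch.~6]{bn}) gives $D_Z\leqslant Z(F)$. Hence $F$ centralises $D_Z$, and combined with the commutation with $D\cap E$ we conclude $[F,D]=1$.

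The step I expect to require the most care is the decomposition $D=C_D(E)(D\cap E)$, since Lemma~\ref{lm:torus-in-component} handles only one component at a time and the inductive consolidation must carefully exploit the pairwise commutation of components; the remaining ingredients—the description of $C_{C^\circ_G(x)}(E)$, the finiteness of $Z(E)$, and the centrality of $p$-tori inside connected nilpotent groups of finite Morley rank—are immediate.
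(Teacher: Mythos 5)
Your strategy is essentially the paper's own: the published proof consists of the single observation that a $p$-torus in a definable nilpotent group is central, citing \cite[Theorem~6.9]{bn}, and everything else you write is the reduction that the authors leave implicit behind the words ``immediately follows''. That reduction is sound in outline: $D\leqslant C^\circ_G(x)$ and is a maximal $p$-torus there; the componentwise decompositions from Lemma~\ref{lm:torus-in-component} do assemble, via the commutation of distinct components, into $D=C_D(E)(D\cap E)$; the factor $D\cap E$ is centralised by $F$ because the product $F\cdot E$ is central; and $C_{FE}(E)=F\cdot Z(E)$ with $Z(E)$ finite.

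The one step you should not wave at is the assertion that $D_Z=C_D(E)$ is connected. It is the intersection of the (non-definable, divisible) group $D$ with the definable subgroup $C_G(E)$, and such an intersection need not be divisible: as a subgroup of $({\Bbb Z}_{p^\infty})^n$ it decomposes as $C_D(E)=C^{\mathrm{div}}\oplus\Phi$ with $\Phi$ finite, and only $C^{\mathrm{div}}$ is forced into $F$ by the finiteness of $Z(E)$. The repair costs one line: the quotient $D/\bigl(C^{\mathrm{div}}\cdot(D\cap E)\bigr)$ is divisible (as a quotient of $D$) and finite (generated by the image of $\Phi$), hence trivial, so $D=C^{\mathrm{div}}\cdot(D\cap E)$ and the argument proceeds with the divisible part alone. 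A second, smaller point: your parenthetical gloss of the key fact as ``the torsion subgroup is central'' overstates it — the bounded-exponent torsion of a connected nilpotent group of finite Morley rank need not be central; the correct and sufficient statement, and exactly the one the paper cites, is that $p$-tori (divisible torsion) are central, which is all you actually use since $C^{\mathrm{div}}$ is a $p$-torus.
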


\begin{proof} The result immediately follows from the fact that a
$p$-torus in a definable nilpotent group belongs to the center of
this group \cite[Theorem~6.9]{bn}. \end{proof}

\subsection{Root Subgroups}

{}From now on, $SL_2$ will be used instead of $SL_2({\Bbb F})$, etc.
Denote by $\Sigma$ the set of all definable subgroups isomorphic
to $(P)SL_2$ and normalised by $D$. These are our future root
$SL_2$-subgroups.
If $N$ is a subgroup of $G$ which is normalised by $D$, then set
$H_N:=C_N(D\cap N)$. Note that if $K\in\Sigma$, then $H_K$ is
an algebraic torus in $K$.

\begin{lemma}
The set\/ $\Sigma$ is non-empty.
\end{lemma}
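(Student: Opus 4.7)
The plan is to exhibit a root $SL_2$-subgroup inside a suitable centralizer $C^\circ_G(t)$ and verify that $D$ normalises it. The starting point is to find a $p$-element $t \in D$ for which $E(C^\circ_G(t)) \ne 1$. Suppose to the contrary that $E(C^\circ_G(x)) = 1$ for every $x \in D$ of order $p$. Then by assumption (B) each $C^\circ_G(x)$ equals $F^\circ(C^\circ_G(x))$, which by Lemma~\ref{lm:F} centralises $D$. Combined with assumption (C), this would force $D \leq Z(G) = 1$, contradicting ${\rm pr}(D) \geq 3$.

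Fix such an element $t$. Because $D$ is connected and centralises $t$, it lies in $C^\circ_G(t)$; because $G$ is simple and non-abelian, $t$ is non-central, so $C^\circ_G(t)$ is a proper connected definable subgroup of $G$ containing $D$. Hence by (A) it is a $K$-group, and Lemma~\ref{algcomps} presents it as
\[
C^\circ_G(t) = F \cdot L_1 \cdots L_r,
\]
each $L_i$ being a simple algebraic group over an algebraically closed field of characteristic $\ne p$. The group $D$, being connected, acts trivially on the finite set $\{L_1,\ldots,L_r\}$ of components, so it normalises each $L_i$.

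Pick any component $L = L_i$. The maximality of $D$ as a $p$-torus of $G$ transfers to maximality of $D$ as a $p$-torus of $C^\circ_G(t)$, so Lemma~\ref{lm:torus-in-component} applies and gives that $D \cap L$ is a maximal $p$-torus of $L$ and $D = C_D(L)(D \cap L)$. Fact~\ref{fact:p-torus} then identifies $C_L(D \cap L)$ as a maximal torus of $L$; any root $SL_2$-subgroup $K$ of $L$ relative to this torus is closed in $L$ (hence definable in $G$), is normalised by $D \cap L$, and is centralised by $C_D(L)$, and therefore is normalised by $D$. So $K \in \Sigma$.

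There is no serious obstacle in the argument; its entire content is a bookkeeping of hypotheses of lemmas already available. The points requiring care are: checking that $C^\circ_G(t)$ is a proper connected definable subgroup containing $D$ so that assumption (A) applies, and that $D$ remains a maximal $p$-torus inside $C^\circ_G(t)$ so that Lemma~\ref{lm:torus-in-component} can be invoked on each component.
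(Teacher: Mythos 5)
Your proof is correct and follows essentially the same route as the paper: both arguments hinge on the observation that if $E(C^\circ_G(x))=1$ for all $x$ then $G=\langle C^\circ_G(x)\rangle$ would centralise $D$ by Lemma~\ref{lm:F}, contradicting simplicity, and otherwise a component of some $E(C^\circ_G(t))$ supplies a $D$-normalised root $SL_2$-subgroup. You merely spell out the last step (via Lemma~\ref{lm:torus-in-component} and Assertion~\ref{fact:p-torus}) more explicitly than the paper does.
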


\begin{proof} Assume the contrary. If $L=E(C^\circ_G(x)) \ne 1$ for
some element $x$ of order $p$ from $D$, then, $L$ being a central
product of simple algebraic groups, contains a definable
$SL_2$-subgroup normalised by $D$. Therefore $C^\circ_G(x) =
F^\circ(C^\circ_G(x))$ centralises $D$ by Lemma~\ref{lm:F}. But
then \[G = \langle C^\circ_G(x) \mid x \in D, \; |x|=p\rangle\]
centralises $D$ which contradicts the assumption that $G$ is
simple.   \end{proof}

\begin{lemma}
\label{pairs} Let $K,L\in \Sigma$ be distinct and set
 $M = \langle K, L\rangle$. Then the following statements hold.
\begin{itemize}
\item[{\rm (1)}] The subgroup
$C_D(K) \cap C_D(L) \ne 1$ and $M$ is a $K$-group.

\item[{\rm (2)}] Either\/ $K$ and\/ $L$ commute or\/
$M$ is an algebraic group of type $A_2$, $B_2$ or $G_2$.

\item[{\rm (3)}]  $D\cap M = (D\cap K)(D\cap L)$ is a maximal\/ $p$-torus in
$M$.

\item[{\rm (4)}] If\/ $K$ and\/ $L$
do not commute then $H_M$ is a
maximal algebraic torus of the algebraic group $M$, and
$K$ and\/ $L$ are root\/ $SL_2$-subgroups of the algebraic
group $M$ with respect to the maximal torus $H_M$.

\item[{\rm (5)}] For all $K, L\in\Sigma$, we have $[H_K,H_L]=1$.

\item[{\rm (6)}]  For any $K,L\in\Sigma$, if the $p$-tori
$D\cap K$ and\/ $D\cap L$ have
intersection of order $>2$, then $K=L$.
\end{itemize}
\label{lm:tori}
\end{lemma}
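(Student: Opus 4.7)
The general plan is to place $M=\langle K,L\rangle$ inside the centraliser of a common $p$-element, invoke assumption~(A) to obtain a $K$-group structure, transfer to an algebraic-group setting via Lemma~\ref{algcomps}, and then read off (3)--(6) from the classical theory of rank-$2$ simple algebraic groups. For (1), the conjugation action of the connected $p$-torus $D$ on $K\cong (P)SL_2$ embeds $D/C_D(K)$ into $\mathrm{Inn}(K)\cong PGL_2$ by Assertion~\ref{defaut}, and any divisible abelian $p$-subgroup of $PGL_2$ has Pr\"{u}fer rank at most $1$; hence $C_D(K)$ has Pr\"{u}fer rank at least $n-1$, where $n\geqslant 3$ is the Pr\"{u}fer rank of $D$, and likewise for $C_D(L)$. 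Two subgroups of Pr\"{u}fer rank $\geqslant n-1$ in a divisible abelian $p$-group of rank $n$ meet in a subgroup of rank $\geqslant n-2\geqslant 1$, so I pick $x\in C_D(K)\cap C_D(L)$ of order $p$. Assertion~\ref{zilb} makes $M$ connected, whence $M\leqslant C_G^\circ(x)$; since $G$ is simple and $x\neq 1$, $C_G^\circ(x)$ is proper, and as it contains $D$, assumption~(A) makes $C_G^\circ(x)$, and hence $M$, a $K$-group.

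For (2), Lemma~\ref{algcomps} decomposes $C_G^\circ(x)=F\cdot L_1\cdots L_r$ with each $L_i$ a simple algebraic group. Since $K$ is quasi-simple and $F$ is nilpotent, $K\cap F\leqslant Z(K)$, so $K$ projects to a $(P)SL_2$-subgroup $\bar K$ of the semisimple quotient $\bar H:=C_G^\circ(x)/F$; similarly for $\bar L$, and both are normalised by the image of $D$. Using the $D$-normalisation together with the structure of central products of simple algebraic groups, $\bar K$ and $\bar L$ each lie inside a single simple algebraic factor of $\bar H$, and Fact~\ref{prop:class-in-algebraic} then yields the dichotomy: either $\bar K$ and $\bar L$ commute, or $\langle\bar K,\bar L\rangle$ is a simple algebraic group of type $A_2$, $B_2$ or $G_2$. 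Because $M=M'$ (as it is generated by perfect subgroups), Assertion~\ref{centext} lifts this back to $M$ itself as an algebraic group of the same type.

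Once $M$ is algebraic of rank $2$, parts (3) and (4) follow from classical theory together with Assertion~\ref{fact:p-torus}: $H_M=C_M(D\cap M)$ is a maximal algebraic torus of $M$, the $(P)SL_2$-subgroups $K$ and $L$ become root $SL_2$-subgroups relative to $H_M$, and $D\cap M=(D\cap K)(D\cap L)$ is the full $p$-torus of $H_M$. Part (5) is then automatic: either $K$ and $L$ commute and there is nothing to prove, or $H_K,H_L\leqslant H_M$ which is abelian. For (6), if $K$ and $L$ commute then $K\cap L\leqslant Z(K)\cap Z(L)$ has order at most $2$, and if not, two distinct root $SL_2$-subgroups of a rank-$2$ simple algebraic group intersect in a subgroup of order at most $2$ (sitting inside the centre of the ambient group); either way, $|D\cap K\cap L|>2$ forces $K=L$. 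The principal obstacle is the reduction in (2)---passing from $M$ being merely a $K$-group to being an actual simple algebraic group, and in particular ruling out ``diagonal'' placements of $\bar K$ across several simple factors of $\bar H$ before Fact~\ref{prop:class-in-algebraic} becomes directly applicable.
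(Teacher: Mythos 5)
Your proposal is correct and follows essentially the same route as the paper: locate an element $x$ of order $p$ in $C_D(K)\cap C_D(L)$, place $M$ inside $C^\circ_G(x)$, use assumption (A) and the decomposition from Lemma~\ref{algcomps} to land $K$ and $L$ in simple algebraic components, and then apply Assertion~\ref{prop:class-in-algebraic} to read off (2)--(6). The only presentational differences are that you prove $C_D(K)\cap C_D(L)\ne 1$ directly via Assertion~\ref{defaut} and a Pr\"ufer-rank count (the paper outsources this to \cite[Lemma~9.3]{berkman}), and that you pass to the quotient $C^\circ_G(x)/F$ and lift back via Assertion~\ref{centext}, where the paper simply observes that the perfect subgroups $K,L$ already lie in $E(C^\circ_G(x))$ because the quotient by the layer is nilpotent.
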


\begin{proof} For the proof of $C_D(K) \cap C_D(L) \ne 1$ we refer the reader to the proof of \cite[Lemma~9.3]{berkman}. After that $M \leqslant C_G(C_D(K) \cap C_D(L))$ is a proper definable subgroup of $G$ and is a $K$-group since $D$ normalizes $M$.

(2)-(3) For  $L\in \Sigma$ set $R_L = C^\circ_D(L)$. If  $n =
{\rm pr}(D)$, then  the Pr\"{u}fer $p$-rank of $R_L$ is $n-1$.
Note that since $D$ is maximal and $D$ centralises a $p$-torus in
$L$, $D \cap L$ is a maximal $p$-torus in $L$. Now let $x$ be an
element of order $p$ in $R_K\cap R_L$. Then $K,L\leqslant
E(C_G(x))$ by the assumptions of the theorem. Set $E =
E(C_G(x))$. It follows from Lemma~\ref{lm:torus-in-component}
that the subgroup $D \cap E$ is a maximal $p$-torus of $E$, and
the subgroups $K$ and $L$, being $D$-invariant, lies in
components of $E$. If $K$ and $L$ belong to different components
of $E$, then they commute. Otherwise the component $A$ that
contains both $K$ and $L$ is a simple algebraic group, and
moreover $D\cap A$ is a maximal $p$-torus in $A$. Hence
the results follow from Assertion~\ref{prop:class-in-algebraic}.\\
(4)-(6) These follow by inspecting case by case and Assertion~\ref{prop:class-in-algebraic}. \end{proof}

\begin{lemma}
The subgroups in $\Sigma$ generate $G$.
\label{lm:sigmagenerates}
\end{lemma}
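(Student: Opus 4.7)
The plan is to show that $E_G := \langle E(C^\circ_G(x)) \mid x \in D,\ |x| = p\rangle$ is a nontrivial normal subgroup of $G$ contained in $\langle \Sigma\rangle$; then the simplicity of $G$ forces $\langle \Sigma\rangle \supseteq E_G = G$.

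First I would verify $E(C^\circ_G(x)) \subseteq \langle \Sigma\rangle$ for every $x \in D$ of order $p$. By Lemma~\ref{algcomps}, $E(C^\circ_G(x)) = L_1 \cdots L_r$ with each $L_i$ a simple algebraic group in characteristic $\ne p$. Lemma~\ref{lm:torus-in-component} gives that $D \cap L_i$ is a maximal $p$-torus of $L_i$ with $D = C_D(L_i)(D \cap L_i)$, and Assertion~\ref{fact:p-torus} then identifies $T_i := C_{L_i}(D \cap L_i)$ as a maximal algebraic torus of $L_i$. Each root $SL_2$-subgroup of $L_i$ relative to $T_i$ is normalised by $D \cap L_i \subseteq T_i$ and centralised by $C_D(L_i)$, hence normalised by $D$, so it belongs to $\Sigma$. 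Since these root $SL_2$-subgroups generate $L_i$, we obtain $L_i \subseteq \langle \Sigma\rangle$ and thus $E(C^\circ_G(x)) \subseteq \langle \Sigma\rangle$.

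Next I would establish normality of $E_G$. Set $F_G := \langle F^\circ(C^\circ_G(y)) \mid y \in D,\ |y|=p\rangle$. By Lemma~\ref{lm:F}, every element of every $F^\circ(C^\circ_G(y))$ centralises $D$; in particular such an element centralises every $z \in D$ of order $p$ and therefore normalises both $C^\circ_G(z)$ and its characteristic subgroup $E(C^\circ_G(z))$. So $F_G$ normalises every generator of $E_G$, hence normalises $E_G$. Since $E_G$ normalises itself, the subgroup $\langle F_G, E_G\rangle$ normalises $E_G$; but assumption (B) yields $C^\circ_G(x) \subseteq \langle F_G, E_G\rangle$ for each $x$ of order $p$ in $D$, and assumption (C) then upgrades this to $\langle F_G, E_G\rangle = G$. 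Thus $E_G \triangleleft G$.

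Finally, $E_G \ne 1$: the same argument as in the proof that $\Sigma \ne \emptyset$ shows that $E(C^\circ_G(x)) \ne 1$ for at least one such $x$, since otherwise every $C^\circ_G(x) = F^\circ(C^\circ_G(x))$ would centralise $D$, and so would $G$, contradicting simplicity. The simplicity of $G$ therefore forces $E_G = G$, and so $\langle \Sigma\rangle = G$. The main obstacle in this plan is that the Fitting parts $F^\circ(C^\circ_G(y))$ are not visibly in $\langle \Sigma\rangle$; what rescues the argument is that Lemma~\ref{lm:F} forces them to centralise $D$, which puts them into the \emph{normaliser} of $E_G$ — enough to promote $E_G$ to a normal subgroup even though the Fitting pieces need never be shown to lie in $\langle \Sigma\rangle$ themselves.
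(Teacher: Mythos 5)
Your proof is correct and follows essentially the same route as the paper: the first step (putting the components $L_i$ into $\langle\Sigma\rangle$ via root $SL_2$-subgroups relative to a maximal torus containing $D\cap L_i$, using $D=C_D(L_i)(D\cap L_i)$) is identical, and the endgame rests on the same two pillars, namely Lemma~\ref{lm:F} plus assumptions (B), (C) and simplicity. The only cosmetic difference is that the paper observes directly that $C_G(D)$ permutes $\Sigma$, giving $G=C_G(D)\langle\Sigma\rangle\leqslant N_G(\langle\Sigma\rangle)$, whereas you route the normality argument through the auxiliary subgroup $E_G\leqslant\langle\Sigma\rangle$; both are sound.
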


\begin{proof} Let $x\in D$ be of order $p$, then by assumption (B) of
the theorem
\[C^\circ_G(x) = F \cdot L_1\cdots L_n,\]
where $F = F^\circ(C^\circ_G(x))$ and  $L_i \lhd C^\circ_G(x)$ is
a simple algebraic group, for each $i=1,\ldots,n$.

The first step is to prove that $L_1\cdots
L_n\leqslant\langle\Sigma\rangle$. Note that $D\leqslant
C^\circ_G(x)$ and $D\cap L_i$ is a maximal $p$-torus in $L_i$ by
Lemma~\ref{lm:torus-in-component}. Let $H_i$ stand for the
maximal algebraic torus in $L_i$ containing $D\cap L_i$ and
$\Gamma_i$ be the collection of root $SL_2$-subgroups in $L_i$
normalised by $H_i$. Since $D\cap L_i\leqslant H_i$, $D\cap L_i$
normalises the subgroups in $\Gamma_i$. By Lemma~\ref
{lm:torus-in-component}, we have $D=C_D(L_i)(D\cap L_i)$, hence
$D$ normalises $\langle\Gamma_i\rangle=L_i$; that is
$\Gamma_i\subseteq \Sigma$ for each $i=1,\ldots,n$. This proves
the first step.

Hence for each $x\in D$ of order $p$, $C^\circ_G(x)=F\cdot
E(C^\circ_G(x))\leqslant F\langle\Sigma\rangle \leqslant
C_G(D)\langle\Sigma\rangle$. Therefore
\[G=\langle C^\circ_G(x)\mid x\in D, |x|=p\rangle\leqslant C_G(D)
\langle\Sigma\rangle.\] Since $C_G(D)$ normalises
$\langle\Sigma\rangle$, we have $\langle\Sigma\rangle\unlhd G$.
Now the result follows, since $G$ is simple. \end{proof}

We make $\Sigma$ into a graph by taking $SL_2$-subgroups $L \in
\Sigma$ for vertices and connecting two vertices $K$ and $L$ by
an edge if $K$ and $L$ do not commute.

\begin{lemma}
The graph $\Sigma$ is connected. \label{lm:connected}
\end{lemma}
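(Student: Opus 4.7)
My plan is to argue by contradiction using the fact that $G$ is generated by $\Sigma$ (Lemma~\ref{lm:sigmagenerates}) together with the simplicity of $G$. Suppose $\Sigma$ is disconnected, and write $\Sigma = \bigsqcup_{\alpha} \Sigma_\alpha$ as a disjoint union of its connected components. By construction of the graph, any two subgroups lying in different components commute.

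For each component $\Sigma_\alpha$, set $G_\alpha := \langle \Sigma_\alpha \rangle$. Since each element of $\Sigma_\alpha$ is a definable connected subgroup, Zil'ber's Indecomposability Theorem (Assertion~\ref{zilb}) guarantees that $G_\alpha$ is itself definable and connected. Moreover, the definition of the edges of $\Sigma$ immediately yields $[G_\alpha, G_\beta] = 1$ whenever $\alpha \ne \beta$, and Lemma~\ref{lm:sigmagenerates} gives $G = \langle G_\alpha : \alpha \rangle$.

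Fix a component $\Sigma_\alpha$. I claim that $G_\alpha \lhd G$. Indeed, every $g \in G$ can be written as a finite product of elements drawn from the various $G_\beta$'s, and the factors coming from $G_\beta$ with $\beta \ne \alpha$ centralise $G_\alpha$, so conjugation by $g$ reduces to conjugation by the $G_\alpha$-factors, which preserves $G_\alpha$. The simplicity of $G$ then forces $G_\alpha \in \{1, G\}$. Since each element of $\Sigma$ is a nontrivial $(P)SL_2$, any nonempty $\Sigma_\alpha$ yields $G_\alpha \ne 1$, hence $G_\alpha = G$. But then, for any other nonempty component $\Sigma_\beta$ with $\beta \ne \alpha$, every $K \in \Sigma_\beta$ would centralise $G_\alpha = G$, placing $K$ in $Z(G) = 1$, which is absurd. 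Thus only one component can be nonempty, i.e.\ $\Sigma$ is connected.

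I do not anticipate a genuine obstacle here; the only subtlety is verifying normality of $G_\alpha$ in $G$ (as opposed to mere normality in the subgroup generated by $\Sigma_\alpha$ together with its commuting partners), and this is handled by the observation that the generators of $G$ coming from other components act trivially on $G_\alpha$.
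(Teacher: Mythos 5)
Your proposal is correct and follows essentially the same route as the paper, which also derives a contradiction with simplicity from a decomposition of $\Sigma$ into two mutually commuting nonempty pieces (the paper writes $G=\langle\Sigma'\rangle\times\langle\Sigma''\rangle$ directly). Your version merely spells out the normality of each $\langle\Sigma_\alpha\rangle$ rather than asserting the product decomposition, which is a harmless elaboration of the same idea.
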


\begin{proof} Otherwise consider a decomposition $\Sigma = \Sigma'
\cup \Sigma''$ of $\Sigma$ into the union of two non-empty sets
such that no vertex in $\Sigma'$ is connected to a vertex in
$\Sigma''$. Then we have
\[G = \langle \Sigma \rangle = \langle \Sigma' \rangle \times \langle \Sigma'' \rangle,
\]
which contradicts the assumption that $G$ is simple. \end{proof}

\begin{lemma}
If\/ $L \in \Sigma$ then $L = E(C_G(C_D(L)))$. \label{lm:L-unique}
\end{lemma}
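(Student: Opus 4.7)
The plan is to recover $L$ as the layer of $C^\circ_G(R)$, where $R := C_D(L)$, by embedding $L$ inside the algebraic structure of $C^\circ_G(x)$ for a well-chosen $x\in R$ of order $p$, and then computing the centraliser of $R$ componentwise. Since $E$ is by definition the connected component of the layer, $E(C_G(R)) = E(C^\circ_G(R))$, so there is no loss in working with the connected centraliser.

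First I would check that ${\rm pr}(R) = n-1$, where $n = {\rm pr}(D)\geqslant 3$: the proof of Lemma~\ref{pairs} already gives ${\rm pr}(C^\circ_D(L)) = n-1$, and $R\neq D$ because $D\cap L$ is an infinite $p$-torus in $L\cong (P)SL_2$, hence not central in $L$. In particular $R$ contains some $x$ of order $p$, so $L\leqslant C^\circ_G(x)$, and by Lemma~\ref{algcomps} one has $C^\circ_G(x) = F\cdot L_1\cdots L_r$. I would then place $L$ inside a unique component, say $L_1$: perfectness of $L$ forces it into the layer $L_1\cdots L_r$ (since $C^\circ_G(x)/E(C^\circ_G(x))$ is nilpotent), while ${\rm pr}(D\cap L)=1$ excludes a diagonal embedding across several factors, as such an embedding would add up the Pr\"ufer ranks of its projections into the distinct $D\cap L_i$. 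Inside the simple algebraic group $L_1$, Seitz's result (cf.\ Fact~\ref{prop:class-in-algebraic}) then identifies the $(D\cap L_1)$-normalised subgroup $L\cong (P)SL_2$ as a root $SL_2$-subgroup for some root $\alpha$ of $L_1$.

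The heart of the proof is a componentwise analysis of $C^\circ_G(R)\leqslant C^\circ_G(x)$. Since $R$ centralises $F$ (Lemma~\ref{lm:F}) and normalises each $L_i$, one has the inclusion
\[
C^\circ_G(R) \leqslant F\cdot C^\circ_{L_1}(R)\cdots C^\circ_{L_r}(R).
\]
For every $i\geqslant 2$, the component $L_i$ commutes with $L\leqslant L_1$, so $D\cap L_i\leqslant C_D(L) = R$; hence $R$ already contains the maximal $p$-torus of $L_i$, and Fact~\ref{fact:p-torus} makes $C^\circ_{L_i}(R)$ a maximal algebraic torus of $L_i$, contributing nothing to the layer. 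For $i=1$, $R\cap L_1\supseteq C_{D\cap L_1}(L)$ is a sub-$p$-torus of $D\cap L_1$ of codimension $1$ (it is the part of $D\cap L_1$ on which $\alpha$ vanishes), so $C^\circ_{L_1}(R\cap L_1)$ is the rank-one Levi subgroup $L\cdot Z$ with $Z$ a central torus, whose layer is exactly $L$. Assembling the pieces yields $E(C^\circ_G(R)) = L$.

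I expect the main technical obstacle to be the pinpointing of $L$ as a root $SL_2$-subgroup of a unique component $L_1$: ruling out diagonal embeddings depends on the Pr\"ufer-rank-$1$ observation, and identifying $L$ as a root subgroup depends on Seitz; both must be applied with care. Once this identification is secure, the centraliser decomposition inside $C^\circ_G(x)$ and the rank bookkeeping that kills the unwanted $L_i$ for $i\geqslant 2$ are routine.
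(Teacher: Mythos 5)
Your overall strategy is sound and reaches the right conclusion, but it takes a genuinely different and much longer route than the paper. The paper's proof is a three-line Pr\"ufer rank count: since $C_D(L)$ has Pr\"ufer rank $n-1$ and is central in $C_G(C_D(L))$, which contains $D$ and hence has Pr\"ufer rank $n$, the layer $E(C_G(C_D(L)))$ --- a central product of simple algebraic groups of characteristic $\ne p$, each of which must carry Pr\"ufer rank at least $1$ --- can only have Pr\"ufer rank $1$; so it is a single component of type $A_1$, and the $(P)SL_2$-subgroup $L$ sitting inside it must be the whole thing. Your componentwise analysis of $C^\circ_G(R)$ inside $C^\circ_G(x)$, the identification of $L$ as a root $SL_2$-subgroup via Seitz, and the Levi-subgroup computation $C^\circ_{L_1}(\ker\alpha)=L\cdot Z$ all do extra work that the rank count makes unnecessary; they also import facts (centralisers of non-maximal sub-$p$-tori are Levi subgroups; centralisers distribute over central products up to connected components) that are standard but are not among the tools the paper has set up.

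There is also one genuinely incorrect step: your reason for excluding a diagonal embedding of $L$ across several components --- that ``such an embedding would add up the Pr\"ufer ranks of its projections into the distinct $D\cap L_i$'' --- is false. A diagonally embedded quasicyclic group in ${\Bbb Z}_{p^\infty}\times{\Bbb Z}_{p^\infty}$ has Pr\"ufer rank $1$, equal to that of each projection, not to their sum; so ${\rm pr}(D\cap L)=1$ does not by itself rule out a subdirect embedding. The conclusion is still true, but it needs a different argument, for instance: if $L$ projected nontrivially onto two components $L_1$ and $L_2$, then $N_{L_1\cdots L_r}(L)$ would have Pr\"ufer rank at most $1$ (being $L$ extended by something finite modulo the kernel components), yet it must contain the $p$-torus $(D\cap L_1)(D\cap L_2)$ of Pr\"ufer rank at least $2$, since $D$ normalises $L$. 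Alternatively, one avoids the issue entirely by following the paper and never locating $L$ inside a specific component of $C^\circ_G(x)$ at all.
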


\proof Let ${\rm pr}(D) = n$, then ${\rm pr}(C_G(C_D(L)))=n$
and ${\rm pr}(E(C_G(C_D(L)))) =1$. Since $L \leqslant
E(C_G(C_D(L)))$ and  $E(C_G(C_D(L)))$ is a central product of
simple algebraic groups over algebraically closed fields of
characteristic $\ne p$, we immediately conclude that $L =
E(C_G(C_D(L)))$. \hfill $\Box$

\subsection{Weyl Group}

Recall that when $L\in\Sigma$, $H_L$ stands for the maximal
algebraic torus $H_L:=C_L(D\cap L)$ in $L\cong SL_2$. Now set $H =
\langle H_L \mid L \in \Sigma \rangle$ and call it the {\em
natural torus associated with $D$}. It easily follows from
Lemma~\ref{lm:tori}(5) that $H$ is a divisible abelian group.

For any $L \in \Sigma$, $W(L):=N_L(H)H/H=N_L(H_L)/H_L$ is the
Weyl group of $SL_2$ and has order $2$; hence $W(L)$ contains a
single involution, which will be denoted by $r_L$. Note that the
 subgroup $L$ is uniquely determined by $r_L$, since $C_D(L) =
C^\circ_D(r_L)$ and $L = E(C_G(C_D(L)))$ by
Lemma~\ref{lm:L-unique}.

\begin{lemma}
\label{refgp} Consider a graph\/ $\Delta$ with the set of vertices
$\Sigma$, in which two vertices $K$ and\/ $L$ are connected by an edge
if\/ $[r_K, r_L] \ne 1$. If\/ $K$ and\/ $L$ belong to different
connected components of\/ $\Delta$, then $[K,L] =1$.

\end{lemma}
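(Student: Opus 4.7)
The plan is to prove the stronger claim that the edge sets of $\Sigma$ and $\Delta$ actually coincide, so the two graphs share the same connected components. Once this is established, the lemma follows immediately: if $K$ and $L$ lie in different components of $\Delta$, they also lie in different components of $\Sigma$, and by the definition of edges in $\Sigma$ every element of the $\Sigma$-component of $K$ commutes with every element of the $\Sigma$-component of $L$.

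The inclusion ``edge in $\Delta$ implies edge in $\Sigma$'' is immediate, since any lifts of $r_K$ and $r_L$ to $N_K(H_K)\leqslant K$ and $N_L(H_L)\leqslant L$ respectively commute as soon as $K$ and $L$ do. For the reverse inclusion, suppose $K$ and $L$ are distinct non-commuting members of $\Sigma$ and set $M=\langle K,L\rangle$. By Lemma~\ref{lm:tori}(2), $M$ is a simple algebraic group of type $A_2$, $B_2$, or $G_2$, and by Lemma~\ref{lm:tori}(4), $H_M=C_M(D\cap M)$ is a maximal algebraic torus of $M$ with respect to which $K$ and $L$ are root $SL_2$-subgroups. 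Hence the nontrivial Weyl element of each of $K$ and $L$ lifts to $N_M(H_M)$ and projects to the corresponding simple reflection in the rank-$2$ Weyl group $W(M)=N_M(H_M)/H_M$. Since $W(M)$ is dihedral of order $6$, $8$, or $12$ and is generated by these two simple reflections, they do not commute; this non-triviality of their commutator, already visible on $H_M$, transfers to the natural interpretation of $[r_K,r_L]$ and produces the required edge in $\Delta$.

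The main obstacle I foresee is essentially notational: $r_K$ and $r_L$ are defined as involutions in the separate quotient groups $W(K)$ and $W(L)$, so the expression $[r_K,r_L]$ has to be read in some common ambient structure. The cleanest choice is to view $r_K$ and $r_L$ through their conjugation action on the natural torus $H\supseteq H_M$, because any lifts inside $N_M(H_M)$ induce well-defined automorphisms of $H_M$ and, through the dihedral identification above, act non-commutingly there. Once this interpretation is pinned down, the rank-$2$ structural argument closes the loop and delivers the lemma.
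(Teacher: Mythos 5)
Your reduction to $M=\langle K,L\rangle$ is the same first move as the paper's, but the stronger statement you hang everything on --- that the edge sets of $\Sigma$ and $\Delta$ coincide --- is false, and the step that derives it is the gap. When $[K,L]\neq 1$, Lemma~\ref{pairs} makes $K$ and $L$ root $SL_2$-subgroups of the rank-$2$ group $M$ with respect to $H_M$, but nothing makes $r_K$ and $r_L$ a \emph{simple} system of reflections, and $\langle K,L\rangle=M$ does not imply $\langle r_K,r_L\rangle=W(M)$. In type $B_2$ the root $SL_2$-subgroups attached to two orthogonal short roots do not commute (their commutator lands in the long root subgroups) and generate all of $(P)Sp_4$, yet the corresponding reflections commute and generate only a Klein four-group inside the dihedral group of order $8$. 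So $[K,L]\neq 1$ with $[r_K,r_L]=1$ is a live configuration --- an edge of $\Sigma$ that is not an edge of $\Delta$. The paper itself treats it as live: the proof of Lemma~\ref{equiv}(3) explicitly considers the case where $L_i$ and $L_j$ generate $(P)Sp_4$ with $|r_ir_j|=2$, and excludes it only because there $r_i$ and $r_j$ are simple reflections of $W_0$, a property your arbitrary $K,L\in\Sigma$ need not enjoy.

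This is exactly why the lemma is stated in terms of connected components of $\Delta$ rather than single edges. The missing idea in the problematic case is to route through a third vertex: the remaining root $SL_2$-subgroups of $M$ relative to $H_M$ (one checks they are normalised by $D$ and hence lie in $\Sigma$) supply, for instance, a long root $SL_2$-subgroup $J$ with $[r_J,r_K]\neq 1\neq [r_J,r_L]$; more generally, the non-commuting graph on the reflections of an irreducible rank-$2$ Weyl group ($S_3$, $D_8$ or $D_{12}$) is connected. That yields a path from $K$ to $L$ in $\Delta$ whenever $[K,L]\neq 1$, which is the contrapositive that the paper's one-line proof (``check in $M$, obvious by Lemma~\ref{pairs}(2)'') is pointing at.
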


\proof It suffices to check this statement in the subgroup
$M=\langle K,L\rangle$, where it is obvious by Lemma~\ref{pairs}(2).
\hfill $\Box$

\bigskip
Notice that $D$ is a $p$-torus, the subgroups $N_G(D)$ and
$C_G(D)$ are definable and the factor group $N_G(D)/C_G(D)$ is
finite. Set $W:=N_G(D)/C_G(D)$. The images of involutions $r_L$,
for $L\in \Sigma$, in $W$ generate a subgroup which we denote by
$W_0$. Since, by their construction, involutions $r_L$ normalise
$D$, there is a natural action of $W_0$ on $D$.

\begin{lemma} The $p$-torus $D$ lies in the natural torus $H$.
In particular, $D$ is the Sylow $p$-subgroup of\/ $H$. \label{lm:D<H}
\end{lemma}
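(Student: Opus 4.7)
The plan is to prove $D \leq H$ in two stages. First I would show that $D$ centralises $H$, which yields the easy inclusion $H_p \leq D$, where $H_p$ denotes the $p$-primary part of the divisible abelian group $H$. Then I would establish the reverse inclusion $D \leq H$ using the central product decomposition of $C^\circ_G(x)$ supplied by Lemma~\ref{algcomps}. The ``in particular'' statement then follows at once, since the Sylow $p$-subgroup of the divisible abelian $H$ is exactly $H_p$, which equals $D$ by these inclusions together with the maximality of $D$.

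For the commutation, fix $L \in \Sigma$. The subgroup $H_L$ is a one-dimensional algebraic torus inside $L$, and $D$ normalises it. Since $N_L(H_L)/H_L$ has order two, $D$ acts on $H_L$ either trivially or by inversion. But $D$ is abelian and centralises the non-trivial subgroup $D \cap L \leq H_L$, so inversion is excluded. Hence $D$ centralises every $H_L$, and therefore $D$ centralises $H$. The product $D \cdot H_p$ is then a $p$-torus in $G$ containing $D$, and the maximality of $D$ forces $D \cdot H_p = D$, that is, $H_p \leq D$.

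For the reverse inclusion, fix $x \in D$ of order $p$. By Lemma~\ref{algcomps} we have $C^\circ_G(x) = F_x \cdot L_1 \cdots L_r$, where $F_x := F^\circ(C^\circ_G(x))$ centralises $D$ by Lemma~\ref{lm:F} and each $L_i$ is a simple algebraic group of characteristic $\neq p$. Iterating Lemma~\ref{lm:torus-in-component} over the components gives $D = C_D(E_x) \cdot (D \cap E_x)$ with $E_x := L_1 \cdots L_r$. The argument in the proof of Lemma~\ref{lm:sigmagenerates} places every root $SL_2$-subgroup of each $L_i$ into $\Sigma$; since the co-roots of a simple algebraic group span the cocharacter lattice of its Cartan over $\mathbb{Q}$, the $p$-primary part of the subgroup generated by the Cartans of these root $SL_2$-subgroups coincides with the $p$-primary part of the Cartan of $L_i$, namely $D \cap L_i$. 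Hence $D \cap L_i \leq H$ for each $i$, and so $D \cap E_x \leq H$. A centraliser computation inside the central product $F_x \cdot E_x$ shows that $C_D(E_x)$ lies in the product of $F_x \cap D$ (the maximal $p$-torus of the nilpotent group $F_x$) with $D \cap Z(E_x) \leq D \cap E_x \leq H$, reducing everything to showing that $F_x \cap D \leq H$.

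The main obstacle is to handle this residual $p$-torus $F_x \cap D$. My plan is to iterate the construction: any $y \in F_x \cap D$ of order $p$ is central in $C^\circ_G(x)$, so $C^\circ_G(y) \supseteq C^\circ_G(x)$ and $E_y \supseteq E_x$; applying the same decomposition to $y$ writes $y = y' z'$ with $z' \in D \cap E_y \leq H$ and $y' \in F_y \cap D$. The ascending chain $C^\circ_G(x) \leq C^\circ_G(y) \leq \cdots$ of definable connected subgroups must stabilise by finite Morley rank, and at stabilisation the residual central $p$-element either lies in the centre of some new component of $E$ and hence in a $D \cap L' \leq H$, or else the intersection of the residual $p$-tori over all choices of initial $x$ is centralised by $\langle C^\circ_G(z) \mid z \in D,\, |z|=p \rangle = G$ by assumption~(C) of Theorem~\ref{sis-kebap}, hence lies in $Z(G) = 1$, a contradiction. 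Either way $F_x \cap D \leq H$, completing the proof.
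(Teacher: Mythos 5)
Your first two stages are fine (the commutation $[D,H]=1$ and the inclusion $D\cap E_x\leqslant H$ both go through), but the argument breaks down exactly at the point you yourself identify as ``the main obstacle'': the residual torus $F_x\cap D$. The iteration you propose makes no progress once it reaches an element $y$ of order $p$ lying in $F^\circ(C^\circ_G(y))\cap D$. Indeed, at stabilisation of the chain you have $C^\circ_G(y')=C^\circ_G(y)$, so decomposing $y'$ with respect to its own centraliser simply returns $y'=y'\cdot 1$: no new component of $E$ appears, and the first branch of your ``either/or'' has no justification. The second branch fails for a different reason: assumption (C) does show that
\[\bigcap_{z\in D,\ |z|=p}\bigl(F^\circ(C^\circ_G(z))\cap D\bigr)\leqslant C_G(G)=Z(G)=1,\]
but the element your iteration produces is residual only for the \emph{single} chain starting at $x$; it is central in $C^\circ_G(y)$ and nothing forces it to be centralised by $C^\circ_G(z)$ for other $z$, so it need not lie in that intersection. (Decomposing $y$ afresh with respect to another $x_2$ produces a \emph{different} residual factor, so you cannot pass to the intersection over all starting points.) There are also smaller unaddressed issues --- e.g.\ $C_D(E_x)$ sits a priori in $F_x\cdot Z(E_x)$ rather than in $(F_x\cap D)(D\cap Z(E_x))$, and one must argue around the finite centre $Z(E_x)$ --- but the missing idea is how to kill the part of $D$ that is central in the centraliser of a single $p$-element.

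The paper closes this gap by a global Weyl-group averaging argument rather than a local analysis of centralisers. It sets $D'=\langle D\cap L\mid L\in\Sigma\rangle\leqslant D\cap H$ and supposes $D'<D$. Since $[D,r_L]=D\cap L\leqslant D'$, every reflection $r_L$ acts trivially on the divisible quotient $D/D'$, so for $d\in D$ of large order the averaged element $z=\prod_{w\in W_0}d^w$ is $W_0$-invariant and still has order at least $p^2$. The index estimate $|C_D(r_L):C_D(L)|\leqslant 2$ then gives $z^p\in C_D(L)$ for \emph{every} $L\in\Sigma$ simultaneously, whence $1\neq z^p\in C_G(\langle\Sigma\rangle)=Z(G)=1$ by Lemma~\ref{lm:sigmagenerates}, a contradiction. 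The point is that $W_0$-invariance is what forces a nontrivial element to centralise \emph{all} of $\Sigma$ at once --- precisely the uniformity your centraliser-by-centraliser iteration cannot supply. If you want to keep your framework, you would need to replace the final step by some such global argument.
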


\proof Set $D' = \langle D\cap L \mid L \in \Sigma \rangle$. It suffices
to prove that $D'=D$. First note that $D'\leqslant D \cap H$.
If $D'<D$ then, since $[D, r_L] = D\cap L$,
all involutions $r_L$ act trivially on the factor group $D/D'$
which is divisible.
Let us take an element $d \in D$ which has sufficiently big order
so that the image of $d^{|W_0|}$ in $D/D'$ has order at least
$p^2$. Then the element $$z=\prod_{w\in W_0} d^w$$ has the same
image in $D/D'$ as $d^{|W_0|}$ and thus $z$ has order at least
$p^2$. Since $D = C_D(L)(D\cap L)$ and $|C_D(L) \cap (D\cap L)|
\leqslant |Z(L)| \leqslant 2$, we see that $|C_D(r_L):
C_D(L)|\leqslant 2$. Of course, the equality is possible only if
$p=2$. In any case, since $z \in C_D(r_L)$, $z^p \in C_D(L)$ for
all $L \in \Sigma$ and $z^p \ne 1$.  Hence $z^p \in C_G(\langle
\Sigma \rangle) = Z(G)$. This contradiction shows that $D = D'$
and $D \leqslant H$. \hfill $\Box$

\begin{lemma}
$N_G(D)= N_G(H)$. \label{lm:N=N}
\end{lemma}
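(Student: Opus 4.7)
The plan is to prove the two inclusions $N_G(D)\leqslant N_G(H)$ and $N_G(H)\leqslant N_G(D)$ separately, since each direction rests on a different structural ingredient that has already been built up in the paper.

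For the inclusion $N_G(D)\leqslant N_G(H)$, I would argue that conjugation by any $g\in N_G(D)$ permutes the family $\Sigma$. Indeed, if $L\in\Sigma$ then $L^g$ is still a definable subgroup isomorphic to $(P)SL_2$, and it is normalised by $D^g=D$, so $L^g\in\Sigma$. Moreover, since $g$ normalises $D$ we have $(D\cap L)^g=D\cap L^g$, and consequently
\[H_{L^g}=C_{L^g}(D\cap L^g)=C_{L^g}\bigl((D\cap L)^g\bigr)=H_L^g.\]
Thus the generating set $\{H_L:L\in\Sigma\}$ of $H$ is permuted by conjugation by $g$, and so $H^g=H$.

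For the reverse inclusion $N_G(H)\leqslant N_G(D)$, I would invoke Lemma~\ref{lm:D<H}, which identifies $D$ with the Sylow $p$-subgroup of $H$. Because $H$ has already been observed to be a divisible abelian group (from Lemma~\ref{lm:tori}(5) applied to the generators $H_L$), its $p$-primary part is uniquely determined as the set of $p$-elements of $H$; in particular this part is characteristic in $H$. Any $g\in N_G(H)$ therefore preserves this $p$-primary part, which is exactly $D$, giving $g\in N_G(D)$.

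I do not anticipate a genuine obstacle here: the lemma is essentially bookkeeping once one has both that $D$ sits inside $H$ as its (unique) $p$-primary part and that conjugation respects the assignment $L\mapsto H_L$. The only fine point worth being careful about is this last compatibility, which relies crucially on the fact that $g$ normalises $D$ (so that $D\cap L$ is carried to $D\cap L^g$ rather than to some other maximal $p$-torus of $L^g$); without that, one could not conclude $H_L^g=H_{L^g}$.
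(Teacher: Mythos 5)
Your proof is correct and follows essentially the same route as the paper: the inclusion $N_G(H)\leqslant N_G(D)$ from Lemma~\ref{lm:D<H} ($D$ being the characteristic $p$-primary part of the divisible abelian group $H$), and the reverse inclusion from the fact that conjugation by an element of $N_G(D)$ permutes $\Sigma$ and hence the generating family $\{H_L\}$ of $H$. Your explicit verification that $H_{L^g}=H_L^g$ is a detail the paper leaves implicit, but the argument is the same.
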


\proof The embedding $N_G(H) \leqslant N_G(D)$ follows from
Lemma~\ref{lm:D<H}. Vice versa, if $x \in N_G(D)$ then the action
of $x$ by conjugation leaves the set $\Sigma$ invariant, hence it
leaves invariant the set of algebraic tori $\{H_L \mid L \in
\Sigma\}$ which generates $H$. Therefore $x \in N_G(H)$. \hfill
$\Box$

\begin{lemma}
$C_G(D)= C_G(H)$. \label{lm:C=C}
\end{lemma}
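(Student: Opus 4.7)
The inclusion $C_G(H) \subseteq C_G(D)$ is immediate from Lemma~\ref{lm:D<H}, since $D \leqslant H$. For the reverse direction, my plan is to fix an arbitrary $g \in C_G(D)$ and verify that $g \in C_G(H_L)$ for every $L \in \Sigma$; this suffices because $H$ is generated by the subgroups $H_L$.

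First I would show that $g$ normalises each such $L$. Since $g$ centralises the subgroup $C_D(L) \leqslant D$, it normalises the definable centraliser $C_G(C_D(L))$, and therefore normalises its layer $E(C_G(C_D(L)))$, which equals $L$ by Lemma~\ref{lm:L-unique}. Thus conjugation by $g$ induces an algebraic automorphism $\varphi_g$ of $L \cong (P)SL_2$. Next I would invoke Assertion~\ref{defaut} --- applied inside the finite Morley rank quotient $N_G(L)/C_G(L)$, in which $L$ appears as a simple algebraic normal subgroup --- to deduce that $\varphi_g$ lies in the product of the inner and graph automorphism groups of $L$. Because $L$ is of type $A_1$, whose Dynkin diagram admits no nontrivial symmetry, $\varphi_g$ must in fact be inner: there exists $h \in L$ with $gh^{-1} \in C_G(L)$.

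To finish, I would note that $g$ fixes $D \cap L$ pointwise, so conjugation by $h$ does too; hence $h$ lies in $C_L(D \cap L) = H_L$. Since $H_L$ is abelian, $h$ centralises $H_L$, and $gh^{-1} \in C_G(L) \leqslant C_G(H_L)$, whence $g \in C_G(H_L)$, as required. The only mildly delicate point I foresee is the clean invocation of Assertion~\ref{defaut}, which is formulated for a semidirect product: this is the usual passage to the quotient $N_G(L)/C_G(L)$ of finite Morley rank, where $L$ appears centrally --- a standard manoeuvre that needs only care, not new ideas.
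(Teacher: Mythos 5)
Your proof is correct and follows essentially the same route as the paper: both inclusions are handled identically, with the reverse one obtained by showing that $g\in C_G(D)$ normalises each $L=E(C_G(C_D(L)))$ via Lemma~\ref{lm:L-unique} and then centralises $H_L$. The only difference is that you justify the final step ($g$ centralises $H_L$) explicitly through Assertion~\ref{defaut} and the absence of graph automorphisms for type $A_1$, where the paper simply asserts it; this is a legitimate filling-in of detail, not a different argument.
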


\proof Let $x \in C_G(D)$, then, for every $L \in \Sigma$, $x$
centralises $C_D(L)$ and thus, by Lemma~\ref{lm:L-unique},
normalises $L = E(C_G(C_D(L)))$. Since $x$ centralises a maximal
$p$-torus $D\cap L$ of $L$, it centralises the maximal torus $H_L
= C_L(D\cap L)$. Hence $x\in C_G(H)$. This proves $C_G(D)
\leqslant C_G(H)$. The reverse inclusion follows from
Lemma~\ref{lm:D<H}. \hfill $\Box$

\bigskip

In view of  Lemmata~\ref{lm:N=N} and \ref{lm:C=C}, we can refer to
$W_0$ either as the subgroup generated by the images of
involutions $r_L$ in the factor group $N_G(D)/C_G(D)$ or as the
subgroup generated by the images of involutions $r_L$ in
$N_G(H)/C_G(H)$. Also, we now know that the group $W_0$ acts on
$D$ faithfully.

\subsection{Tate Module}

Now the aim is to construct  a $\Bbb Z$-lattice on which $W_0$
acts as a crystallographic reflection group. For that purpose we
shall associate with $D$ the {\em Tate module} $T_p$. It is
constructed in the following way.

Let $E_{p^k}$ be the subgroup of $D$ generated by elements of order
$p^k$. Notice that every $r_L$ acts on $E_{p^k}$ as a reflection,
that is, $E_{p^k} = C_{E_{p^k}}(r_L)\times [E_{p^k}, r_L]$, $[E_{p^k},
r_L]\leqslant D \cap L$ is a cyclic group  and $r_L$
inverts every element in  $[E_{p^k}, r_L]$.

Consider the sequence of subgroups
\[E_p \longleftarrow E_{p^2} \longleftarrow E_{p^3} \longleftarrow
\cdots\] linked by the homomorphisms $x \mapsto x^p$. The
projective limit of this sequence is the free module $T_p$ over
the ring ${\Bbb Z}_p$ of $p$-adic integers. The action of $W_0$
on $D$ can be lifted to $T_p$, where it is still an irreducible
reflection group. By construction, $T_p /pT_p$ is isomorphic to
$E_p$ as a $W_0$-module. Notice also that $W_0$ acts on the
tensor product $T_p \otimes_{{\Bbb Z}_p} {\Bbb C}$ as a (complex)
reflection group, and that the dimension of $T_p
\otimes_{{\Bbb Z}_p} {\Bbb C}$ over $\Bbb C$
coincides with the Pr\"{u}fer
$p$-rank of $D$, hence is at least $3$.

\subsection{More Reflection Representations for ${W_0}$}

Now let us focus on odd primes $q\ne p$.
Consider the elementary abelian $q$-subgroups $E_q$ generated in
$H$ by all elements of the fixed prime order $q$.  For the sake of
complete reducibility of the action of $W$ on $E_q$, one can
consider only $q > |W|$.

Lemmas~\ref{kerdcc}, \ref{long} and \ref{equiv} below are
similar to some results in \cite{berkman}.
We include the proofs here for the sake of completeness of exposition.

\begin{lemma} {\rm \cite[Lemma 9.7]{berkman}}
\label{kerdcc} Let\/  $N=N_G(H)$, then $C_N(E_q) = C_G(H)$.
\end{lemma}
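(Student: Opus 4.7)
The plan is to establish the two inclusions separately. The inclusion $C_G(H)\subseteq C_N(E_q)$ is immediate, since $E_q\leqslant H$ and $C_G(H)\leqslant N_G(H)=N$.

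For the reverse inclusion, fix $x\in C_N(E_q)$ and write $\sigma$ for the automorphism of $H$ induced by conjugation by $x$. By Lemmas~\ref{lm:N=N} and~\ref{lm:C=C}, the quotient $N/C_G(H)$ is identified with the finite group $W$; since $q>|W|$, the element $\sigma$ has finite order $m$ with $\gcd(m,q)=1$ and acts trivially on $E_q=H[q]$.

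The first step of the plan is to extend triviality from $E_q$ to the whole $q$-primary part $H[q^{\infty}]$ by passing to the $q$-adic Tate module $\tau_q:=\varprojlim_n H[q^n]$, a free $\mathbb{Z}_q$-module whose reduction satisfies $\tau_q/q\tau_q\cong E_q$. The lift of $\sigma$ to $\mathrm{GL}(\tau_q)$ has order $m$ and lies in the kernel of the reduction map $\mathrm{GL}(\tau_q)\to\mathrm{GL}(\tau_q/q\tau_q)$, which is a pro-$q$ subgroup and therefore contains no non-trivial element of order coprime to $q$. It follows that $\sigma$ acts trivially on $\tau_q$, and hence on $H[q^{\infty}]$.

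The second, and harder, step is to propagate triviality on $H[q^{\infty}]$ to triviality on all of $H$. Here the plan is to exploit that $\sigma\in N_G(H)$ permutes the family $\{H_L\}_{L\in\Sigma}$ of algebraic root tori generating $H$. Once $\sigma$ is shown to normalise each $H_L$ individually, the fact that $H_L\cong K_L^{\ast}$ is a one-dimensional algebraic torus allows one to reapply the pro-$q$ congruence argument inside $\mathrm{Aut}(H_L)$, upgrading triviality on $H_L[q]$ to triviality on $H_L$ and hence giving $\sigma=\mathrm{id}$ on $H=\langle H_L\mid L\in\Sigma\rangle$. To rule out non-trivial $\sigma$-orbits on $\{H_L\}$, one observes that if $H_L^{\sigma}=H_{L'}$ with $L\ne L'$, then since $\sigma$ fixes $H_L[q]\leqslant E_q$ pointwise, one would have $H_L[q]\leqslant H_L\cap H_{L'}$; for $q$ sufficiently large this contradicts the bounded intersection of distinct one-dimensional root tori inside the rank-$2$ algebraic groups arising via Lemma~\ref{lm:tori}.

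The main obstacle is precisely this last orbit-permutation analysis: the Tate-module reduction is routine, but upgrading from a single prime component to the whole natural torus $H$ requires careful use of the decent-torus structure of $H$, namely that $H$ is assembled in a controlled algebraic way from the one-dimensional subtori $H_L$.
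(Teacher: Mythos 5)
Your overall skeleton is close to the paper's: the easy inclusion, then showing that $x\in C_N(E_q)$ must normalise each $L\in\Sigma$ (your orbit argument via $H_L[q]\leqslant H_L\cap H_{L'}$ against the bounded intersection of distinct members of $\Sigma$ is exactly the contradiction the paper uses, via Lemma~\ref{pairs}), and finally deducing that $x$ centralises each $H_L$ and hence $H$. The preliminary $q$-adic Tate-module step is harmless but superfluous: it only controls the $q$-primary part of $H$, which is never where the difficulty lies.

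The genuine gap is in your last step. You claim that once $\sigma$ normalises $H_L$ and is trivial on $H_L[q]$, a ``pro-$q$ congruence argument inside $\mathrm{Aut}(H_L)$'' upgrades this to triviality on all of $H_L$. This fails: $H_L\cong\mathbb{F}^{\ast}$ for an algebraically closed field $\mathbb{F}$, and as an abstract divisible abelian group it admits automorphisms of finite order coprime to $q$ that are trivial on the entire $q$-primary torsion yet nontrivial elsewhere --- on the $\ell$-torsion for primes $\ell\ne q$, or on the torsion-free divisible part in characteristic $0$. A congruence argument sees only the $q$-adic Tate module of $H_L$, not $\mathrm{Aut}(H_L)$. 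What is actually needed, and what the paper supplies, is that $x$ induces a \emph{definable} automorphism of $L\cong(P)SL_2$, and $SL_2$ has no definable outer automorphisms (Assertion~\ref{defaut}); hence $x$ acts on $H_L$ as an element of $N_L(H_L)$, i.e.\ modulo the abelian torus either trivially or by the inverting Weyl involution. Inversion is excluded because $x$ centralises the subgroup of order $q>2$ in $H_L$, so $x$ centralises $H_L$ and therefore $H=\langle H_L\mid L\in\Sigma\rangle$. Without this definability input your argument does not close.
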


\proof It is clear that $C_G(H)\leqslant C_N(E_{q})$. To see the
converse, let $x\in C_N(E_{q})$. Since $x\in N$,
it acts on the elements of $\Sigma$ by conjugation.

First let us prove that $x$ normalises each subgroup in $\Sigma$.
To get a contradiction, assume that there is some subgroup
$L\in\Sigma$ such that $L^x\neq L$. But then by
Lemma~\ref{pairs}, $L$ and $L^x$ either commute or generate a
semisimple group as root $SL_2$-subgroups. Hence $|L\cap
L^x|\leqslant 2$. But this gives a contradiction since $q$ is an
odd prime and $L\cap E_{q}=L^x\cap E_{q} \leqslant L\cap L^x$.

Hence for each $L\in \Sigma$, $L^x=L$ and $x$ acts on $H\cap L$ as
an element from $N_L(H\cap L)$, since $SL_2$ does not have any
definable outer automorphisms. Note that the Weyl group of $SL_2$
is generated by an involution which inverts the torus $H\cap L$.
Since $x$ centralises $E_{q}\cap H$, $x$ centralises $H\cap L$ for
each $L\in\Sigma$, and hence $x$ centralises $H=\langle H\cap
L\mid L\in \Sigma\rangle$ and $x\in C_G(H)$. This proves the
equality. \hfill $\Box$

\bigskip

Now notice that $[E_q,r_L]$ is generated by a $q$-element in $H_L$
and thus has order $q$.  Hence $E_q$ is a finite dimensional
vector space over ${\Bbb F}_q$ on which $W_0$ acts as a group
generated by reflections.

\begin{lemma}
\label{wirr} The group $W_0$ acts irreducibly on $E_q$.
\end{lemma}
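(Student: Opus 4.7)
The plan is to argue by contradiction: suppose $U$ is a proper non-zero $W_0$-invariant subspace of $E_q$. Since $q$ may be taken larger than $|W|$, Maschke's theorem produces a $W_0$-invariant complement $U'$, so $E_q=U\oplus U'$ with both summands non-zero. Because each $r_L$ is a reflection with $1$-dimensional $-1$-eigenspace $[E_q,r_L]$, and both $U$ and $U'$ are $r_L$-stable, the line $[E_q,r_L]$ must sit entirely in one of the two summands (otherwise both summands lie in $C_{E_q}(r_L)$, forcing $r_L$ to be trivial on $E_q$). This partitions $\Sigma=\Sigma_U\sqcup \Sigma_{U'}$ according to which summand contains $[E_q,r_L]$. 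Using that $H$ is divisible abelian and generated by the algebraic tori $H_L$, and that $r_L$ inverts $H_L$, I would verify that $E_q=\sum_L[E_q,r_L]$; combined with the direct-sum decomposition this forces $U=\sum_{L\in \Sigma_U}[E_q,r_L]$ and $U'=\sum_{L\in \Sigma_{U'}}[E_q,r_L]$, so both $\Sigma_U$ and $\Sigma_{U'}$ are non-empty.

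The next step is to show that reflections from opposite sides commute. If $L\in\Sigma_{U'}$ then $U\cap[E_q,r_L]\subseteq U\cap U'=0$, so $U$ contains no $-1$-eigenvector of $r_L$ and $r_L$ is trivial on $U$; symmetrically $r_K$ is trivial on $U'$ for $K\in\Sigma_U$. Hence $r_Kr_L$ and $r_Lr_K$ agree on each summand, so $[r_K,r_L]$ acts trivially on $E_q$. By Lemmas \ref{lm:N=N} and \ref{lm:C=C}, $W=N_G(H)/C_G(H)$, and Lemma \ref{kerdcc} identifies the kernel of the action of $N_G(H)$ on $E_q$ with $C_G(H)$, so $W_0\leqslant W$ acts faithfully on $E_q$. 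Thus $[r_K,r_L]=1$ in $W_0$, meaning no edge of the graph $\Delta$ of Lemma \ref{refgp} runs between $\Sigma_U$ and $\Sigma_{U'}$; each connected component of $\Delta$ lies entirely in one of them.

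Finally, Lemma \ref{refgp} converts the absence of edges in $\Delta$ into genuine commutation: $[K,L]=1$ for every $K\in\Sigma_U$ and $L\in\Sigma_{U'}$. The subgroups $\langle\Sigma_U\rangle$ and $\langle\Sigma_{U'}\rangle$ therefore commute and together generate $\langle\Sigma\rangle=G$; by Zil'ber's Indecomposability Theorem they are definable. Each is then a non-trivial normal subgroup of $G$, so simplicity of $G$ forces one of them to equal $G$ and the other to lie in $Z(G)=1$, contradicting that both $\Sigma_U$ and $\Sigma_{U'}$ are non-empty. The only delicate point is the span claim $E_q=\sum_L[E_q,r_L]$ in the first step, where the divisible abelian structure of $H$ is essential; once this is established, the remainder is bookkeeping combining Lemma \ref{kerdcc} with Lemma \ref{refgp}.
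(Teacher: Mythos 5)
Your argument has the same skeleton as the paper's: decompose $E_q$ into two invariant summands by complete reducibility, observe that each line $[E_q,r_L]$ falls into exactly one summand, deduce that reflections attached to opposite summands commute, transfer this to commutation of the corresponding $SL_2$-subgroups via Lemma~\ref{refgp}, and reach a contradiction with the simplicity of $G$ (equivalently, with the connectedness of the graph $\Sigma$, Lemma~\ref{lm:connected}). That part is sound, as is the appeal to Lemma~\ref{kerdcc} for faithfulness.

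The gap is exactly at the point you flag as delicate: the span claim $E_q=\sum_{L\in\Sigma}[E_q,r_L]$, which you need in order to conclude that both $\Sigma_U$ and $\Sigma_{U'}$ are non-empty, does \emph{not} follow from $H$ being divisible abelian and generated by the tori $H_L$. The $q$-torsion of a sum of divisible subgroups can be strictly larger than the sum of their $q$-torsions: for instance, in $(\mathbb{F}^\times\times\mathbb{F}^\times)/\langle(\zeta,\zeta)\rangle$ with $\zeta$ of order $q$, the images of the two coordinate factors are divisible and their subgroups of $q$-torsion generate a group of order $q$, while the quotient has $q$-torsion of order $q^2$. So no amount of bookkeeping with divisibility alone will close this step. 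The claim is nevertheless true, and the paper proves it in the equivalent form that $W_0$ cannot act trivially on a non-zero invariant summand: since $E_q=C_{E_q}(L)\times(E_q\cap L)$, one has $C_{E_q}(r_L)=C_{E_q}(L)$, so a summand centralised by every reflection $r_L$ is centralised by every $L\in\Sigma$ and hence lies in $C_G(\langle\Sigma\rangle)=Z(G)=1$ by Lemma~\ref{lm:sigmagenerates}. The essential missing input is this passage from being fixed by the reflection $r_L$ to being centralised by the whole subgroup $L$, combined with the simplicity of $G$; once you insert it, the rest of your argument goes through and coincides with the paper's.
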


\proof Note that $W_0$ acts on $E_q$ faithfully by
Lemma~\ref{kerdcc}. Since $q > |W|$, the action of $W_0$ on $E_q$
is completely reducible. So if the action is reducible, then we can write
$E_q = E' \oplus E''$ for two proper $W_0$-invariant subspaces.

Assume that $W_0$ acts trivially on one of the subspaces $E'$ or
$E''$, say on $E'$. If $L\in \Sigma$, then $E_q = C_{E_q}(L)
\times (E_q \cap L)$, and, obviously, $C_{E_q}(L) = C_{E_q}(r_L)$.
Hence all $L \in \Sigma$ centralise $E'$ and $E' \leqslant
C_G(\langle \Sigma\rangle) = Z(G) =1$. Therefore $W_0$ acts
nontrivially on both $E'$ and $E''$.

For $L \in \Sigma$, the
$-1$-eigenspace $[E_q, r_L]$ of $r_L$ belongs to one of the
subspaces $E'$ or $E''$ and hence $r_L$ acts as a reflection on
one of the subspaces $E'$ or $E''$ and centralises the other.
Set $\Sigma'=\{\,L\in\Sigma \mid [E_q,r_L] \leqslant E'\,\}$
and $\Sigma'' = \{\,L\in\Sigma \mid [E_q,r_L] \leqslant E''\,\}$. It
is easy to see that $[r_K,r_L] =1$ for $K \in \Sigma'$ and $L \in
\Sigma''$. By Lemma~\ref{refgp}, $K$ and $L$ commute for all $K
\in \Sigma'$ and $L \in \Sigma''$, which contradicts
Lemma~\ref{lm:connected}. Hence $W_0$ is irreducible on $E_q$.
\hfill $\Box$

\begin{lemma}
The group $W_0$ acts irreducibly on $T_p \otimes_{{\mathbb{Z}}_p}
\mathbb{C}$.
\end{lemma}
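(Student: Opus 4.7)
The strategy is to adapt the proof of Lemma~\ref{wirr} to characteristic zero. By Maschke's theorem (applicable since $W_0$ is finite and $\mathbb{C}$ has characteristic zero), if the action is reducible we obtain a $W_0$-invariant decomposition $T_p \otimes_{\mathbb{Z}_p} \mathbb{C} = V' \oplus V''$ with both summands nonzero. The representation is faithful: $W_0$ acts faithfully on $D$ (as recorded after Lemma~\ref{lm:C=C}), hence on the inverse limit $T_p$, and hence on $T_p \otimes \mathbb{C}$ via the embedding $T_p \hookrightarrow T_p \otimes \mathbb{C}$.

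Each $r_L$ lifts to a reflection on $T_p \otimes \mathbb{C}$ whose one-dimensional $(-1)$-eigenspace is $[T_p \otimes \mathbb{C}, r_L] = T_p(D\cap L) \otimes \mathbb{C}$. Being $r_L$-invariant and one-dimensional, this line lies entirely in $V'$ or in $V''$, and $r_L$ acts trivially on the complementary summand. This partitions $\Sigma = \Sigma' \cup \Sigma''$ according to where the $(-1)$-line sits.

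If both $\Sigma'$ and $\Sigma''$ are nonempty, then for $K \in \Sigma'$ and $L \in \Sigma''$ the reflections $r_K, r_L$ act trivially on complementary summands, so they commute in $\operatorname{GL}(T_p \otimes \mathbb{C})$; by faithfulness they commute in $W_0$, and Lemma~\ref{refgp} then gives $[K, L] = 1$. Hence the graph $\Sigma$ splits into the commuting classes $\Sigma' \cup \Sigma''$, contradicting Lemma~\ref{lm:connected}.

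The delicate case is the remaining one, where (say) $\Sigma' = \emptyset$, so $W_0$ acts trivially on $V' \neq 0$. To rule it out I will use Lemma~\ref{lm:D<H}: $D = \langle D\cap L \mid L \in \Sigma\rangle$. Since $D$ is divisible of finite Pr\"ufer rank $n$, and the lines $(D\cap L)[p]$ span the $p$-socle $E_p$, one can choose $L_1,\dots,L_n \in \Sigma$ with $D = (D\cap L_1) \oplus \cdots \oplus (D\cap L_n)$; applying the Tate-module functor gives $T_p = \bigoplus_{i=1}^n [T_p, r_{L_i}]$, and therefore $T_p \otimes \mathbb{C} = \sum_{L \in \Sigma} [T_p \otimes \mathbb{C}, r_L]$. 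A standard averaging argument (the Reynolds projector $|W_0|^{-1}\sum_{w} w$ annihilates each $(-1)$-eigenspace by pairing inside cosets of $\langle r_L\rangle$) then shows $(T_p \otimes \mathbb{C})^{W_0} = 0$, whence $V' = 0$, a contradiction.
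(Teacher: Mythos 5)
Your proof is correct, and since the paper's own proof is just the one-line remark that it is ``analogous to that of the previous lemma'' (Lemma~\ref{wirr}), your write-up is essentially the intended argument: Maschke in place of $q>|W|$, the partition of $\Sigma$ by where the $(-1)$-eigenline of $r_L$ falls, and the contradiction with Lemma~\ref{refgp} and Lemma~\ref{lm:connected} are all the same. Where you genuinely diverge is the trivial-summand subcase, and you are right to flag it as the delicate point: the paper's argument for $E_q$ disposes of it by observing that $E'\leqslant C_{E_q}(L)$ for all $L$, so $E'\leqslant C_G(\langle\Sigma\rangle)=Z(G)=1$ --- an argument that uses $E'$ being a subgroup of $G$ and so does not transfer verbatim to the abstract vector space $T_p\otimes_{{\Bbb Z}_p}{\Bbb C}$. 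Your substitute --- deducing from Lemma~\ref{lm:D<H} that the $(-1)$-eigenlines of the $r_L$ span the whole space and then killing the fixed subspace with the averaging projector --- is sound (the spanning claim is cleanest via Nakayama: the $[T_p,r_L]$ surject onto $T_p/pT_p\cong E_p$). An alternative that stays closer to the paper's $E_q$ argument: since $T_p$ is finitely generated free over ${\Bbb Z}_p$ and ${\Bbb C}$ is flat, $(T_p\otimes{\Bbb C})^{W_0}=(T_p)^{W_0}\otimes{\Bbb C}$, so a nonzero trivial summand yields nontrivial $W_0$-fixed elements of $D$ of arbitrarily large order; a suitable power of such an element is centralised by every $L\in\Sigma$ and hence lies in $Z(G)=1$. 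Either route works; yours is self-contained and avoids the base-change observation at the cost of the spanning lemma. One small imprecision: a one-dimensional $r_L$-invariant line need not a priori lie in $V'$ or $V''$; the correct reason is $[V,r_L]=[V',r_L]\oplus[V'',r_L]$, so one summand vanishes by dimension count.
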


\proof The proof is analogous to that of the previous lemma.
\hfill $\Box$

\subsection{Root System}

The aim of this subsection is to construct a root system on which $W_0$
acts as a crystallographic reflection group. The existence of such a
root system is guaranteed by the following lemma.

\medskip

\begin{lemma}
\label{rootsys} There exists an irreducible root system on which\/
$W_0$ acts as a crystallographic reflection group.
\end{lemma}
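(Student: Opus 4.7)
The plan is to apply Theorem~\ref{reflection} to the finite group $W_0$, using the representations already constructed on the Tate module and on the subgroups $E_q$. Once $W_0$ is identified as one of $A_n$, $B_n$, $C_n$, $D_n$, $E_6$, $E_7$, $E_8$, $F_4$ with $n\geqslant 3$, the associated crystallographic root system and action are standard.

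First I would set $S := \{\,r_L^w \mid L\in\Sigma,\; w\in W_0\,\}$. Since elements of $W_0 \leqslant N_G(D)/C_G(D)$ normalise $D$, they permute $\Sigma$ by conjugation, and for each $L\in\Sigma$ and $w\in W_0$ we have $r_L^w = r_{L^w}$. Hence $S$ is just the $W_0$-orbit of $\{r_L\}$, which is a normal subset of involutions generating $W_0$ by construction. Next, I would verify the complex reflection representation: take the $W_0$-module $V = T_p \otimes_{\mathbb{Z}_p} \mathbb{C}$. It is irreducible by the lemma preceding this one, it is faithful (since $W_0$ already acts faithfully on $E_p = T_p/pT_p$ by Lemma~\ref{kerdcc}, hence on $T_p$, hence on $V$), and $\dim_{\mathbb{C}} V = {\rm pr}(D) \geqslant 3$. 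On each $E_{p^k}$, $r_L$ fixes $C_{E_{p^k}}(r_L)$ and inverts the cyclic group $[E_{p^k},r_L]\leqslant D\cap L$; passing to the inverse limit, $r_L$ acts on $T_p$ with a one-dimensional $(-1)$-eigenspace and a codimension-one $(+1)$-eigenspace, so it is a reflection on $V$.

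Next, for the modular representations, I would take any odd prime $q\ne p$ with $q > |W|$; there are cofinitely many such $q$. By Lemma~\ref{kerdcc}, $W_0$ acts faithfully on $E_q$; by Lemma~\ref{wirr} the action is irreducible; and since $[E_q, r_L]$ is generated by a $q$-element of $H_L$ and hence is one-dimensional over $\mathbb{F}_q$, while $E_q = C_{E_q}(r_L) \oplus [E_q,r_L]$, each $r_L$ (and each of its conjugates in $S$) acts as a reflection on $E_q$. Thus hypothesis (3) of Theorem~\ref{reflection} holds for all but finitely many primes.

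Having verified hypotheses (1)--(3), Theorem~\ref{reflection} identifies $W_0$ with a Weyl group of type $A_n$, $B_n$, $C_n$, $D_n$, $E_6$, $E_7$, $E_8$ or $F_4$, acting as a crystallographic reflection group on the ambient Euclidean space. Taking the associated irreducible root system gives the conclusion. The only potential obstacle is purely a bookkeeping one, namely matching the abstract reflection representation provided by Theorem~\ref{reflection} with the representation of $W_0$ on $T_p\otimes\mathbb{C}$; this is handled by noting that, up to isomorphism of reflection representations, a crystallographic Weyl group in the above list has a unique faithful irreducible reflection representation of the given dimension, so the root system attached to $W_0$ by the classification carries the required $W_0$-action.
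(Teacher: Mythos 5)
Your proposal is correct and takes the same route as the paper, which simply invokes Theorem~\ref{reflection}; you have usefully spelled out the verification of its hypotheses (the normal generating set $S$ of reflections, the complex representation on $T_p\otimes_{\mathbb{Z}_p}\mathbb{C}$, and the modular representations on $E_q$ for $q>|W|$) that the paper leaves implicit in the preceding subsections. One small citation slip: the faithfulness of the $W_0$-action on $T_p$ should be traced to the faithfulness on $D$ established after Lemma~\ref{lm:C=C} (via $N_G(D)=N_G(H)$ and $C_G(D)=C_G(H)$), not to Lemma~\ref{kerdcc}, which concerns $E_q$ for odd primes $q\neq p$; the conclusion is unaffected.
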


\proof Recall that $n \geqslant 3$. By Theorem~\ref{reflection},
the quotient group $W_0$ is one of the crystallographic reflection
groups $A_n, B_n, C_n, D_n, E_6, E_7, E_8, F_4$ and acts on the
corresponding root system. \hfill $\Box$

\bigskip

Let now $R= \{\, {r}_i \mid i \in I \,\}$ be a simple
system of reflections in $W_0$. We shall identify $I$ with the
set of nodes of the Dynkin diagram for $W_0$. It is well known
that every reflection in an irreducible reflection group $W_0$ is
conjugate to a reflection in $R$.

\begin{lemma} {\rm \cite[Lemma~9.9]{berkman}}
\label{long} Every reflection\/ $r \in W_0$ has the form $r_K$ for
some $SL_2$-subgroup $K \in \Sigma$.
\end{lemma}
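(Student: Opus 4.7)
The plan is to reduce the statement to showing that the set $\Sigma$ meets every $W_0$-conjugacy class of reflections, and then to transport via the conjugation action of $N_G(D)$ on $\Sigma$. First I would observe that $\Sigma$ is stable under conjugation by $N_G(D) = N_G(H)$ (Lemma~\ref{lm:N=N}): an $N_G(D)$-conjugate of a $D$-invariant definable subgroup isomorphic to $(P)SL_2$ is again such a subgroup. Consequently, for $K \in \Sigma$ and $\bar w \in W_0$ with a lift $g \in N_G(D)$, one has $r_K^{\bar w} = r_{K^g}$ with $K^g \in \Sigma$. So it suffices to exhibit, in each $W_0$-conjugacy class of reflections, at least one reflection of the form $r_K$ with $K \in \Sigma$.

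In the simply-laced cases $A_n$, $D_n$, $E_6$, $E_7$, $E_8$ of Lemma~\ref{rootsys}, all reflections of $W_0$ lie in a single $W_0$-conjugacy class, so the non-emptiness of $\Sigma$ settles the matter immediately. For the non-simply-laced types $B_n$, $C_n$, $F_4$ there are exactly two classes of reflections, corresponding to short and long roots, and I would argue by contradiction: if all reflections $r_K$ with $K \in \Sigma$ belonged to just one of these classes, then $W_0 = \langle r_K \mid K \in \Sigma \rangle$ would be contained in the subgroup of the full Weyl group generated by reflections of a single root length. But in $W(B_n) = W(C_n)$ the long root reflections generate only $W(D_n)$ and the short root reflections generate only an elementary abelian $(\mathbb{Z}/2)^n$, while in $W(F_4)$ reflections of either fixed length generate only a copy of $W(D_4)$; none of these coincides with the full Weyl group, contradicting the identification from Lemma~\ref{rootsys}.

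The only genuine obstacle is this last check in the doubly-laced types, and it is a purely group-theoretic fact about Weyl groups verified by inspection of the root systems $B_n$, $C_n$, $F_4$; no further finite Morley rank input is needed beyond the identification of $W_0$ provided by Lemma~\ref{rootsys} and the $N_G(D)$-invariance of the set $\Sigma$.
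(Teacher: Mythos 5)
Your proof is correct, but it takes a genuinely different route from the paper's. The paper argues locally: given a reflection $r$ and a known $r_L$, it compares Pr\"ufer ranks to find a nontrivial $p$-torus inside $(C_H(r_L)\cap C_H(r))^\circ$, so that $\langle L,H,r\rangle$ is a proper definable $K$-subgroup; after conjugating $r$ and $r_L$ to adjacent simple reflections (using, as you do, that an irreducible reflection group has at most two classes of reflections), it identifies $\langle L,H,r\rangle$ as $Y*Z$ for a simple algebraic group $Y$ of Lie rank $2$ and exhibits the required $K$ as a $D$-invariant root $SL_2$-subgroup of $Y$. You instead argue globally and entirely inside $W_0$: the set $\{\,r_K \mid K\in\Sigma\,\}$ is a normal subset of reflections generating $W_0$, and since in an irreducible Weyl group of type $B_n$, $C_n$ or $F_4$ the reflections of a single root length generate only a proper subgroup ($W(D_n)$, $(\mathbb{Z}/2)^n$ or $W(D_4)$ respectively), this normal subset must meet both conjugacy classes of reflections and hence exhaust them; the simply laced case is immediate from non-emptiness of $\Sigma$. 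Your version requires no further finite Morley rank input and no appeal to the rank-$2$ algebraic subgroup machinery at this point, so it is shorter and more self-contained; the paper's version is more constructive in that it realises $K$ inside an explicit rank-$2$ subgroup, a picture that reappears in Lemma~\ref{equiv}. Both arguments rest equally on Lemma~\ref{rootsys} identifying $W_0$, together with its reflection representation, with a Weyl group acting on its root system, so that conjugacy classes of reflections correspond to root lengths.
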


\proof Let $ r\in  W_0$ be a reflection. Working our way back
through the construction of the module $T_p$, one can easily see
that the Pr\"{u}fer $p$-rank of $[D,r]$ is $1$.

Let $r_L\in W_0$ be a reflection which corresponds to a
$SL_2$-subgroup $L\in \Sigma$. By comparing the Pr\"{u}fer
$p$-ranks of the groups $C_D(r_L)$ and $C_D(r)$, we see that
$Z=(C_H(r_L) \cap C_H(r))^\circ$ has Pr\"{u}fer $p$-rank at least $1$.
Hence the subgroup $\langle L,H, r \rangle$ contains a non-trivial
central $p$-torus; also note that $\langle L,H, r \rangle$  is a
is a $K$-group, since $D$ lies in $H$. It is well known that a finite irreducible reflection
group contains at most two conjugate classes of reflections.
Therefore, after replacing $r$ and $r_L$ by their appropriate
conjugates in $W_0$, we can assume without loss of generality that
the images of $r_L$ and $r$ in $W_0$ correspond to adjacent nodes
of the Dynkin diagram. Now we can easily see that $\langle L,H, r
\rangle = Y * Z$ for some simple algebraic group $Y$ of Lie rank
$2$, and that $r=r_K$ for some root $SL_2$-subgroup $K$ of $Y$
such that $K\in \Sigma$. \hfill $\Box$

\subsection{Final Step}

The next task is to prove that the conditions of Lyons's Theorem
(Theorem~\ref{Lyons}) are satisfied.

\begin{lemma} {\rm \cite[Lemma~9.10]{berkman}}
\label{equiv} Attach an $SL_2$-subgroup $L_i \in \Sigma$ to each
vertex of the Dynkin diagram $I$ in such a way that the simple
reflection ${r}_i$ corresponding to this vertex is $r_{L_i}$ in
$W_0$. Then the following statements hold.

 {\rm (1)} $[L_i,L_j]=1$
if and only if\/ $| {r}_i {r}_j|=2$.

 {\rm (2)}   $\langle
L_i,L_j\rangle$ is isomorphic to $(P)SL_3$ if and only if $| {r}_i
{r}_j|=3$.

 {\rm (3)}   $\langle L_i,L_j\rangle$ is isomorphic to
$(P)Sp_4$ if and only if\/ $| {r}_i {r}_j|=4$.

 {\rm (4)}   $L_i$
and $L_j$ are embedded in $\langle L_i,L_j\rangle$ as root\/
$SL_2$-subgroups.
\end{lemma}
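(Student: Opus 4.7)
My approach is a structural case analysis of $M_{ij}:=\langle L_i,L_j\rangle$ for distinct $i,j\in I$. Lemma~\ref{pairs}(2) yields three possibilities: either $[L_i,L_j]=1$, or $M_{ij}$ is a simple algebraic group of type $A_2$, $B_2$, or $G_2$. I would rule out the $G_2$ case at once: if $M_{ij}$ were of type $G_2$, Assertion~\ref{prop:class-in-algebraic}(3) would force $G=M_{ij}$ and hence ${\rm pr}(D)=2$, contradicting the standing hypothesis ${\rm pr}(D)\geqslant 3$. The three surviving possibilities match the right-hand sides of (1), (2), (3) exactly, and are pairwise exclusive; hence the three ``iff'' statements follow as soon as one verifies the forward direction of each.

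The core of the argument is the identification of the order of $r_ir_j$ inside $W_0$ with the order of its image in the Weyl group $W(M_{ij})=N_{M_{ij}}(T)/T$ of the algebraic group $M_{ij}$, where $T=C_{M_{ij}}(D\cap M_{ij})$ is the maximal torus of $M_{ij}$ containing the maximal $p$-torus $D\cap M_{ij}=(D\cap L_i)(D\cap L_j)$ of Lemma~\ref{pairs}(3). To make this transition I would first establish the decomposition $D=(D\cap M_{ij})\cdot C_D(M_{ij})$ by applying Assertion~\ref{defaut} to the action of the connected $p$-torus $D$ on $M_{ij}$, observing that $D$ centralises $D\cap M_{ij}$ and must therefore act through the maximal torus $T$, and then invoking the uniqueness of the maximal $p$-torus inside $T$. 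Since $r_ir_j\in M_{ij}$ automatically centralises $C_D(M_{ij})$, a power $(r_ir_j)^k$ lies in $C_G(D)$ if and only if it lies in $T$, i.e.\ if and only if its image in $W(M_{ij})$ is trivial, yielding the desired equality of orders.

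The remaining computation is then immediate. If $[L_i,L_j]=1$ then $r_i,r_j$ are distinct (by Lemma~\ref{lm:L-unique}) commuting involutions, so $|r_ir_j|=2$. If $M_{ij}\cong(P)SL_3$, Lemma~\ref{pairs}(4) identifies $L_i,L_j$ as the two simple root $SL_2$-subgroups of $M_{ij}$, and the Weyl group $W(M_{ij})\cong S_3$ gives $|r_ir_j|=3$; in the $B_2$ case the Weyl group is dihedral of order $8$ and gives $|r_ir_j|=4$. Statement (4) is then a direct restatement of Lemma~\ref{pairs}(4) in the non-commuting cases (and trivial otherwise). The main obstacle, and the only genuinely nontrivial ingredient, is the torus decomposition $D=(D\cap M_{ij})\cdot C_D(M_{ij})$, which is what bridges the finite computation inside the algebraic Weyl group of the rank-$2$ group $M_{ij}$ and the abstract factor group $W_0=N_G(D)/C_G(D)$.
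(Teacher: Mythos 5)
Your overall architecture --- the trichotomy from Lemma~\ref{pairs}(2), a bridge identifying the order of $r_ir_j$ in $W_0$ with the order of its image in the Weyl group of $M_{ij}=\langle L_i,L_j\rangle$, and then a rank-$2$ computation --- matches the shape of the paper's argument, and your explicit derivation of the decomposition $D=(D\cap M_{ij})\,C_D(M_{ij})$ makes precise a bridge that the paper leaves implicit. However, there are two genuine gaps. First, your exclusion of the $G_2$ case misapplies Assertion~\ref{prop:class-in-algebraic}(3): the ambient group ``$G$'' in that assertion is the simple \emph{algebraic} group containing $K$ and $L$, which in the proof of Lemma~\ref{pairs} is a component $A$ of $E(C^\circ_G(x))$, not the simple group of finite Morley rank $G$ of the theorem. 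The conclusion would only be $A=\langle L_i,L_j\rangle$, which is entirely compatible with ${\rm pr}(D)\geqslant 3$, since $D=C_D(A)(D\cap A)$ and only $D\cap A$ need have Pr\"ufer rank $2$. The paper instead excludes $G_2$ by noting that it would force $|r_ir_j|=6$, while $r_i$ and $r_j$ are simple reflections in a Dynkin diagram of type $A_n,B_n,C_n,D_n,E_6,E_7,E_8$ or $F_4$ (supplied by Theorem~\ref{reflection}), where only the orders $2$, $3$, $4$ occur.

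Second, and more seriously, your assertion that the $B_2$ case ``gives $|r_ir_j|=4$'' is false without further argument, and this is exactly where the lemma's real content lies. In $(P)Sp_4$ the two root $SL_2$-subgroups attached to the orthogonal short roots $e_1-e_2$ and $e_1+e_2$ do \emph{not} commute and do generate the whole group (the Chevalley commutator formula produces the long root subgroups $X_{\pm 2e_1}$, $X_{\pm 2e_2}$), yet the corresponding Weyl reflections commute, so $|r_ir_j|=2$. If this configuration occurred it would simultaneously destroy the ``if'' direction of (1) and the ``only if'' direction of (3); since your logic reduces all three equivalences to the forward implications plus exclusivity, the whole proof collapses at this point. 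The paper devotes the bulk of its argument precisely to ruling this out: in that configuration $L_i$ and $L_j$ would both be \emph{short} root $SL_2$-subgroups of $M_{ij}$, whereas $r_i$ and $r_j$ are simple reflections of the irreducible crystallographic group $W_0$, and an inspection shows one of them must be a long reflection. You need to supply this step (or an equivalent one) before the dihedral-group computation can be invoked.
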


\begin{proof} It is well known that for each $i, j\in I$, the order
$| {r}_i {r}_j|$ takes the values $2$, $3$ or $4$, in a Dynkin
diagram of type $A_n, B_n, C_n, D_n, E_6, E_7, E_8$ or $F_4$. By
Lemma~\ref{pairs}, $L_i$ and $L_j$ either commute or generate
$(P)SL_3$, $(P)Sp_4$ or $G_2$. However $\langle
L_i,L_j\rangle\cong G_2$ is not possible in our case since $|
{r}_i {r}_j|=6$ does not occur in $I$.

The `only if' parts of (1) and (2) are easy to see.
In the case of part (3), that is when $L_i$ and $L_j$
generate $(P)Sp_4$, we have to show that $| {r_i} {r_j}|\neq 2$.
To get a contradiction, assume that $L_i$ and $L_j$
generate $(P)Sp_4$ and $| {r_i} {r_j}|=2$.
But then
 $L_i$ and $L_j$ are both
short root $SL_2$-subgroups. Note that ${r_i}$ and ${r_j}$ are
simple reflections, and it can be checked by inspection that one
of them must be a long reflection. This proves the `only if' part
of (3). Now parts (1), (2) and (3) follow from Lemma~\ref{pairs}
and the previous discussion. Part (4) is a direct consequence of
Lemma~\ref{pairs}. \end{proof}

\begin{lemma}
Each subgroup in $\Sigma$ is isomorphic to $(P)SL_2({\Bbb F})$ for
the same field $\Bbb F$.
\end{lemma}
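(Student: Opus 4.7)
The plan is to propagate the field of definition first along the simple $SL_2$-subgroups attached to the Dynkin diagram $I$, and then to the whole of $\Sigma$ using the Weyl-group action. First, for each edge $(i,j)$ of $I$, Lemma~\ref{equiv} gives that $\langle L_i,L_j\rangle$ is a simple algebraic group isomorphic to $(P)SL_3$ or $(P)Sp_4$ in which $L_i$ and $L_j$ sit as root $SL_2$-subgroups. Consequently both $L_i$ and $L_j$ are isomorphic to $(P)SL_2(\mathbb{F}_{ij})$ for a single algebraically closed field $\mathbb{F}_{ij}$. Since $I$ is one of the connected Dynkin diagrams listed in Lemma~\ref{rootsys}, any two simple nodes are joined by a chain of edges, and the fields $\mathbb{F}_{ij}$ along a chain must coincide. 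Hence there is a single algebraically closed field $\mathbb{F}$ such that $L_i\cong (P)SL_2(\mathbb{F})$ for every $i\in I$.

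Now let $K\in\Sigma$ be arbitrary. By Lemma~\ref{long}, $r_K$ is a reflection in $W_0$. In every irreducible crystallographic reflection group that occurs in Lemma~\ref{rootsys}, every reflection is $W_0$-conjugate to a simple reflection; so choose $g\in N_G(D)$ whose image in $W_0=N_G(D)/C_G(D)$ conjugates some simple $r_i$ to $r_K$. Then $L_i^g\in\Sigma$ and $r_{L_i^g}=r_K$, so Lemma~\ref{lm:L-unique} forces $K=L_i^g$. Conjugation by $g$ is a definable isomorphism $L_i\to K$, whence $K\cong L_i\cong (P)SL_2(\mathbb{F})$, as required. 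The only subtle point is the case of two root lengths (types $B_n$, $C_n$, $F_4$), where $W_0$ has two conjugacy classes of reflections; this is harmless because each conjugacy class already contains a simple reflection, and the previous paragraph identifies the fields of all simple $L_i$ regardless of length.

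The main obstacle is the bookkeeping concerning the field: to be able to speak of a single $\mathbb{F}$, one must know that an abstract (in our case, definable) isomorphism of simple algebraic groups over algebraically closed fields forces the underlying fields to be isomorphic, so that the various $\mathbb{F}_{ij}$'s and the field inherited by each conjugate $K=L_i^g$ can legitimately be identified with one $\mathbb{F}$. This is standard since the base field of a simple algebraic group is interpretable in the group structure, but it is the point that must be invoked carefully at both the propagation step along $I$ and the conjugation step.
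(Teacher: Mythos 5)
Your proof is correct, but it takes a genuinely different route from the paper's. The paper argues directly on the graph $\Sigma$: by Lemma~\ref{lm:connected} any two members of $\Sigma$ are joined by a chain of edges, and by Lemma~\ref{pairs}(2) each edge $\{K,L\}$ means $\langle K,L\rangle$ is a simple algebraic group of Lie rank $2$ containing $K$ and $L$ as root $SL_2$-subgroups, so their underlying fields coincide; connectedness then propagates a single field to all of $\Sigma$ in three sentences, using nothing beyond Lemmas~\ref{pairs} and \ref{lm:connected}. You instead first settle the field for the simple system $\{L_i\}_{i\in I}$ via Lemma~\ref{equiv} and the connectedness of the Dynkin diagram, and then transport it to an arbitrary $K\in\Sigma$ by conjugating $r_K$ to a simple reflection in $W_0$ and invoking the uniqueness statement attached to Lemma~\ref{lm:L-unique} to conclude $K=L_i^g$. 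This is valid --- the only blemish is your citation of Lemma~\ref{long} for the fact that $r_K$ acts as a reflection, which is actually established in the Tate module subsection (Lemma~\ref{long} is the converse direction) --- and it buys something the paper's argument does not: every member of $\Sigma$ is an $N_G(D)$-conjugate of one of the $L_i$. The cost is that you lean on the heavier machinery of the Weyl group, the root system, and the classification of reflections, where the paper's argument is self-contained at the level of the graph $\Sigma$ and could have been stated much earlier in the proof.
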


\begin{proof} By Lemma~\ref{lm:connected} any two subgroups of
$\Sigma$ are connected by a sequence of edges. Note that each pair
which is connected by a single edge generates a simple group of
Lie rank 2 by Lemma~\ref{pairs}(2), hence their underlying fields
coincide. Thus the underlying fields of any two subgroups in
$\Sigma$ coincide. \end{proof}

Finally, we are in a position to apply Lyons's Theorem. Set $G_0$
to be the subgroup of $G$ generated by the subgroups $L_i$ for $ i
\in I$. By Lyons's Theorem, $G_0$ is a simple algebraic group over
$\Bbb F$ with the Dynkin diagram $I$. Its Weyl group, with respect
to the torus $T$, is $W_0$, hence $G_0$ contains all subgroups
from $\Sigma$. Therefore by Lemma~\ref{lm:sigmagenerates}, $G_0=G$
is a Chevalley group over $\Bbb F$. Since $G$ is of $p'$-type,
$\Bbb F$ is of characteristic different from $p$. \hfill $\Box$
$\Box$

\small

\end{document}